\numberwithin{equation}{section}
\newtheorem{theorem}{Theorem}[section]
\newtheorem{lemma}[theorem]{Lemma}
\newtheorem{proposition}[theorem]{Proposition}
\theoremstyle{definition}
\newtheorem{definition}[theorem]{Definition}
\theoremstyle{remark}
\newtheorem{remark}[theorem]{Remark}
\theoremstyle{proof}
\newtheorem*{pfthm}{Proof of Theorem \ref{thm-weakregular}}
\newtheorem*{pfthm2}{Proof of Theorem \ref{thm-riemcone}}
\newcommand{\dd}{\partial}
\newcommand{\leqs}{\leqslant}
\newcommand{\geqs}{\geqslant}
\newcommand{\rean}{\mathbb{R}}
\newcommand{\iv}{^{-1}}
\newcommand{\sphere}{\mathbb{S}}
\DeclareMathOperator{\Ric}{Ric}
\DeclareMathOperator{\Vol}{Vol}
\title{New exotic examples of Ricci limit spaces}
\author{Xilun Li}
\address{School of Mathematical Sciences\\
Peking University\\
Beijing\\ 100871\\ China}
\email{lxl28@stu.pku.edu.cn}
\author{Shengxuan Zhou}
\address{Institut de Math\'{e}matiques de Toulouse\\
	 UMR 5219, Universit\'{e} de Toulouse\\
	CNRS, UPS, 118 Route de Narbonne, F-31062 Toulouse Cedex 9, France}
\email{zhoushx98@outlook.com}
\thanks{}
\keywords{}
\date{}
\dedicatory{}
\begin{document}

\maketitle

\begin{abstract}
 For any integers $m\geqs n\geqs 3$, we construct a Ricci limit space $X_{m,n}$ such that for a fixed point, some tangent cones are $\rean^m$ and some are $\rean^n$. This is an improvement of Menguy's example\cite{MR1829644}. Moreover, we show that for any finite collection of closed Riemannian manifolds $(M_i^{n_i},g_i)$ with $\Ric_{g_i}\geqs(n_i-1)\geqs1$, there exists a collapsed Ricci limit space $(X,d,x)$ such that each Riemannian cone $C(M_i,g_i)$ is a tangent cone of $X$ at $x$. 
\end{abstract}

\maketitle
\tableofcontents

\section{Introduction}
\label{introduction}

Consider the measured Gromov-Hausdorff limit spaces as following:
\begin{align*}
    (M^n_{i} ,g_i ,\nu_i ,p_i ) \stackrel{GH}{\longrightarrow} (X^k ,d ,\nu ,p ) ,\;\; \Ric_{g_i} \geqs -\lambda ,\;\; \nu_i = \frac{1}{\Vol (B_1 (p_i)) } d\Vol_{g_i },
\end{align*}
where $k\in\mathbb{N}$ is the rectifiable dimension of $(X ,d ,\nu )$, which is the unique integer $k$ such that the limit is $k$-rectifiable. The existence of such a $k$ is proved by Colding-Naber\cite{MR2950772}. Moreover, the strong regular set $\mathcal{R}_k(X)$ is a $\nu$-full measure set. Actually, there are two versions of regular sets on $X$\cite{MR1484888}. For $l=1,\cdots ,n$, the weak regular set of $(X,d)$ can be defined by
\begin{align*}
  \mathcal{WR}_l (X) =  \left\{ x\in X : \textrm{ there exists a tangent cone at $x$ isometric to $\mathbb{R}^l$ } \right\},
\end{align*}
and the strong regular set of $(X,d)$ can be defined by
\begin{align*}
  \mathcal{R}_l (X) =  \left\{ x\in X : \textrm{ every tangent cone at $x$ isometric to $\mathbb{R}^l$ } \right\}.
\end{align*}

Cheeger-Colding\cite{MR1484888} shows that in the noncollapsing case, i.e. $\Vol(B_1(p_i))>v>0$ uniformly, two versions coincide. Moreover, the rectifiable dimension and the Hausdorff dimension of the limit space are both equal to $n$. However, in collapsing case, i.e. $\Vol(B_1(p_i))\to0$, many things are quite different. Pan-Wei\cite{MR4431126} shows that the Hausdorff dimension may be larger than the rectifiable dimension, and the Hausdorff dimension can be non-integers. Menguy\cite{MR1829644} shows that  the weak regular set may be not equal to the strong regular set. However, it was still not known whether the intersection of weakly regular sets of different dimensions can be non-empty. In this paper, we construct the first example that shows that the intersection of weakly regular sets of different dimensions can be non-empty. This is an improvement of Menguy's example \cite{MR1829644}.

\begin{theorem}\label{thm-weakregular}
Let $m\geqs n\geqs 2$ be integers. Then there exists a sequence of $(m+n+3)$-dimensional complete Riemannian manifolds $(M_i ,g_i,p_i)$ with $\Ric_{g_i} \geqs 0 $ converging to $(X_{m+1,n+1},d,x)$, such that 
$$ \mathcal{WR}_{m+1} (X) \cap \mathcal{WR}_{n+1} (X) \neq \emptyset .$$
\end{theorem}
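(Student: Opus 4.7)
My plan is to build each $(M_i, g_i, p_i)$ as a telescoping sequence of annular ``bubbles'' at dyadically decreasing scales $r_k = 2^{-k}$, alternating between two types: a type-$m$ bubble whose geometry, rescaled by $r_{2k}^{-1}$, approximates the Euclidean cone $C(\sphere^m) = \rean^{m+1}$, and a type-$n$ bubble whose geometry, rescaled by $r_{2k+1}^{-1}$, approximates $C(\sphere^n) = \rean^{n+1}$. A diagonal Gromov-Hausdorff extraction as $i\to\infty$ (and the number of bubbles grows) produces $(X_{m+1,n+1}, d, x)$, and rescaling at $x = \lim p_i$ along the two families of scales yields both Euclidean tangent cones at the single point $x$.

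Each bubble will be modeled on a multiply-warped product over an annulus with total dimension $m+n+3$. For the type-$m$ bubble I would take a metric of the form $dr^2 + \phi(r)^2 g_{\sphere^m} + \psi(r)^2 g_F$ on $[a_k, b_k]$, where $F$ is an auxiliary closed manifold chosen so that the total dimension is $m+n+3$. The profile $\phi$ is concave with $\phi(r) \approx r$ and $\phi'(r) \to 1$ on the bulk of the annulus, while $\psi$ is concave with $\psi\ll\phi$ throughout; the standard multiply-warped Ricci formulas then give $\Ric \geqs 0$ because the terms $-\phi''/\phi$ and $-\psi''/\psi$ are nonnegative by concavity and the remaining off-diagonal terms are controlled by the smallness of $\psi$. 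The type-$n$ bubble is built symmetrically, interchanging the roles of $\sphere^m$ and the auxiliary factor so that $\sphere^n$ is the factor that opens up.

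The bubbles are glued in a telescoping chain via short transition annuli whose warping profiles interpolate the boundary data of adjacent bubbles while preserving $\Ric \geqs 0$, exactly in the spirit of Menguy's original construction. The transition annuli are chosen short enough relative to $r_k$ that rescaling by $r_k^{-1}$ is dominated by a single bubble: smaller-scale bubbles collapse into an $\epsilon$-neighborhood of $p_i$, while larger-scale bubbles escape to infinity. Rescaling by $r_{2k}^{-1}$ then produces $\rean^{m+1}$ as a pointed Gromov-Hausdorff limit, and rescaling by $r_{2k+1}^{-1}$ produces $\rean^{n+1}$, which gives $x \in \mathcal{WR}_{m+1}(X) \cap \mathcal{WR}_{n+1}(X)$.

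The main obstacle is the tension between nonnegative Ricci, which forces concavity of $\phi$ and genuine smallness of $\psi$, and the requirement that each rescaled tangent cone be \emph{exactly} Euclidean rather than a cone over a round sphere of smaller radius. The latter demands $\phi'(r)\to 1$ on the rescaled bubble, which is in mild conflict with concavity over a long annulus. Tuning each bubble's free parameters (length, opening slope, collapse rate of $\psi$) so that both $\Ric \geqs 0$ and the Euclidean-limit condition hold, and doing so consistently at every dyadic scale while the gluing regions also respect $\Ric \geqs 0$, is the central technical effort.
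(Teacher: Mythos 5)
Your architecture is the same as the paper's: multiply-warped annuli over $\sphere^m$ and $\sphere^n$ (plus an auxiliary factor), telescoped at a decreasing sequence of scales, alternating which sphere ``opens up,'' with the tangent cones read off by rescaling along the two subsequences of scales. But the proposal leaves unresolved exactly the point on which the theorem turns, and one of your concrete choices would defeat the argument. First, the tension you flag at the end --- $\Ric\geqs 0$ forces a cone-angle deficit $\varphi'=1-\epsilon<1$ on any single bubble (the positive term $(m-1)(1-(\varphi')^2)/\varphi^2$ is what pays for the cross-terms and the transitions), yet the tangent cone must be exactly $\rean^{m+1}$ --- is not ``tuning''; it cannot be resolved on a single bubble at all. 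The resolution in the paper is that no individual bubble is ever exactly Euclidean: one takes $\epsilon_i=100^{-i}\to 0$ as the scales decrease, so the rescaled annuli at the $i$-th stage are $\Psi(i^{-1})$-close to annuli in $\rean^{m+1}$ (resp.\ $\rean^{n+1}$), and only the \emph{limit} of rescalings is Euclidean. Without this idea one only produces tangent cones that are metric cones over spheres of radius $1-\epsilon<1$, which does not give $\mathcal{WR}_{m+1}\cap\mathcal{WR}_{n+1}\neq\emptyset$. Making $\epsilon_i\to 0$ compatible with $\Ric\geqs 0$ at every stage is then a genuine constraint propagation problem (the admissible collapse rate $\delta$ of the closed factors depends on $\epsilon$, the transition length $R$ depends on both, and the next scale must be chosen after $R$), which is the content of the paper's Lemmas on Models I and II and the gluing proposition.

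Second, dyadic scales $r_k=2^{-k}$ cannot work. The tangent cone at $x$ obtained by rescaling by $r_{2k}^{-1}$ is a limit of the \emph{entire} pointed rescaled spaces, so the region on which the rescaled metric is nearly Euclidean must exhaust all of $(0,\infty)$ in $r$ as $k\to\infty$. If consecutive bubbles sit at scales of bounded ratio, the type-$m$ bubble occupies an annulus of bounded aspect ratio after rescaling, and the limit is Euclidean only on, say, $A_{1/2,1}$, with the other bubbles' geometry visible inside and outside; that is not $\rean^{m+1}$. You must let the ratio of consecutive scales tend to infinity (the paper takes $L_i=10^{i+1}$ and $N_{i+1}=4N_iL_{i+1}^2R_{i+1}^4$, so the nearly-Euclidean annulus at stage $i$ covers $[L_i^{-1/2},\tfrac12 L_i^{1/2}]$ after rescaling). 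Your own sentence about smaller bubbles collapsing to a point and larger ones escaping to infinity implicitly requires exactly this unbounded separation, so the dyadic choice contradicts the rest of your plan. Two smaller points: the auxiliary factor should not be lumped into a single warping function $\psi$, since $\sphere^n$ must itself open up in the type-$n$ bubbles while a further factor (the paper's $\sphere^2$) stays collapsed throughout to keep $\Ric\geqs 0$ in the transitions; and one must smooth the metric at the tip at each stage (the paper's Lemma \ref{lmm-smoothing}) so that the approximating spaces are genuinely complete smooth manifolds.
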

\begin{remark}
    For this example, the rectifiable dimension of the limit space is $m+n+1$.
\end{remark}

Actually, using the same technique we can also prove the following stronger statement.\footnote{It was suggested by one of the referees.} It shows the possible non-uniqueness of tangent cones at a single point in the collapsed setting when the tangent cones are also Riemannian cones.
\begin{theorem}\label{thm-riemcone}
    Given a finite collection $(M_i^{n_i},g_i)$ $(i=1,\cdots,N)$ of closed Riemannian manifolds satisfying $\Ric_{g_i}\geqs(n_i-1)\geqs1$ for $1\leqs i\leqs N$. Then there exists a sequence of $(\sum_i n_i+5)$-dimensional complete Riemannian manifolds with nonnegative Ricci curvature converging to $(X,d,x)$, such that each Riemannian cone $C(M^{n_i},g_i)$ is isometric to a tangent cone of $X$ at $x$.
\end{theorem}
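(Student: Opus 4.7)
The plan is to extend the warped-product construction used for Theorem \ref{thm-weakregular} from two fibers to an arbitrary finite number of fibers. The limit space $X$ will be realized as (the completion of) a multiply warped product over $(0, \infty)$ whose fibers are all the given $M_i$ together with an auxiliary compact factor providing the extra $4$ dimensions, and the warping functions will be designed so that at a sequence of scales $r_\ell \to 0$ cycling through $\{1,\dots,N\}$, each cone $C(M_i, g_i)$ appears as a pointed Gromov-Hausdorff limit of the rescalings $(X, r_\ell^{-1} d, x)$.

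Concretely, I would consider the metric
\begin{align*}
   g = dr^2 + \sum_{i=1}^{N} f_i(r)^2 g_{M_i} + h(r)^2 g_F
\end{align*}
on $(0, \infty) \times \prod_{i=1}^N M_i \times F$, where $F$ is a closed $4$-manifold with $\Ric_{g_F} > 0$ (for instance $\mathbb{S}^4$ or $\mathbb{S}^2\times\mathbb{S}^2$). Fix an index sequence $i_1, i_2, \ldots \in \{1, \ldots, N\}$ visiting each value infinitely often, together with a fast decreasing sequence $r_\ell \to 0$. The profiles should be arranged so that near $r = r_\ell$ with $i_\ell = j$, one has $f_j(r) \approx r$, while all other $f_i(r)$ and $h(r)$ are of size $\varepsilon_\ell r_\ell$ with $\varepsilon_\ell \to 0$. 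Rescaling by $r_\ell^{-1}$ then collapses every fiber except the $j$-th, and the blow-up converges to $C(M_j, g_j)$, yielding the required tangent cones.

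The Ricci bound becomes a system of differential inequalities coming from the multiply warped product Ricci formulas: each fiber direction contributes a term involving $\Ric_{g_i}$, $(f_i')^2$, $f_i f_i''$ and cross terms from the other warping factors, while the radial Ricci curvature is a linear combination of $-f_i''/f_i$ and $-h''/h$. The hypothesis $\Ric_{g_i} \geq (n_i-1) g_i$ is precisely the Bishop condition that makes the model cone $dr^2 + r^2 g_i$ satisfy $\Ric \geq 0$, so each $M_i$-block can be handled as long as $f_i$ is concave and grows at most linearly. The positivity of $\Ric_{g_F}$, together with concavity and smallness of $h$ and of the nondominant $f_i$, provides the slack needed to absorb the remaining curvature contributions, in the spirit of Menguy's original construction.

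The main obstacle is the explicit design of $f_1, \ldots, f_N, h$ on each transition interval $[r_{\ell+1}, r_\ell]$ so that $\Ric \geq 0$ holds throughout the switch from "$f_{i_\ell}$ dominant" to "$f_{i_{\ell+1}}$ dominant," while keeping all nondominant factors very small. This is the analytic heart of the construction; the mixing of many fibers of possibly different dimensions forces one to balance the cross terms carefully, and the auxiliary factor $h(r) g_F$ is what ultimately makes the system flexible enough. Once this is achieved on every interval, the smooth approximating sequence $(\tilde M_\ell, \tilde g_\ell, p_\ell)$ with $\Ric \geq 0$ is produced by capping off and mollifying near $r = 0$ as in the proof of Theorem \ref{thm-weakregular}, after which the pointed Gromov-Hausdorff convergence $(\tilde M_\ell, \tilde g_\ell, p_\ell) \to (X, d, x)$ and the scale-by-scale identification $(X, r_\ell^{-1} d, x) \to C(M_{i_\ell}, g_{i_\ell})$ follow directly from the construction.
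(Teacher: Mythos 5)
Your overall strategy is the same as the paper's: a multiply warped product over $(0,\infty)$ whose fibers are the $M_i$ plus auxiliary compact factors, with the warping functions cycling so that at a sequence of scales exactly one fiber is conical ($f_{i_0}\approx(1-\epsilon)r$) while all others are collapsed, and with nonnegative Ricci curvature enforced through the multiply warped product formulas, where $\Ric_{g_i}\geqs(n_i-1)g_i$ plays exactly the role you describe. The dimension count $1+\sum_i n_i+4=\sum_i n_i+5$ also matches.

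There is, however, a genuine gap in your last step, ``capping off and mollifying near $r=0$ as in the proof of Theorem \ref{thm-weakregular}.'' Each approximating space must be a \emph{complete smooth} manifold, and the only mechanism available for closing up the warped product at a new origin $r=r_0$ is the one in Lemma \ref{lmm-smoothing}: some fiber must be a round sphere $\sphere^j$ whose warping function equals $r-r_0$ near $r_0$ (so that $\{r_0\}\times\sphere^j$ collapses to a smooth point and the metric extends over $\rean^{j+1}$), and that lemma moreover requires this fiber to already be the conical one ($\varphi=(1-\epsilon)r$) at the capping scale. In your setup none of the fibers can play this role: the $M_i$ are arbitrary closed manifolds, so letting $f_i$ vanish linearly creates a genuine metric singularity, and your auxiliary factor $F$ is both allowed to be non-spherical (e.g.\ $\sphere^2\times\sphere^2$) and is prescribed to stay collapsed at every scale, so it cannot be capped either. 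This is precisely why the paper splits the four auxiliary dimensions into \emph{two} $\sphere^2$ factors: one ($f_{\tilde N}=\rho$) stays small throughout and supplies the Ricci slack, while the other ($f_{\tilde N-1}$) is inserted into the cycling of dominant fibers so that $f_{\tilde N-1}=(1-\epsilon_i)r$ on a sequence of intervals, and it is at those scales that Lemma \ref{lmm-smoothing} is applied to produce the smooth complete manifolds $(\hat M,\hat g_i,O_i)$. To repair your argument you would need to take $F$ to be (or to contain) a round sphere and add it to the cycling scheme as an extra ``dominant'' fiber at the innermost scale of each approximant; as written, the construction does not yield complete manifolds, hence no Ricci limit space.
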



\textbf{Acknowledgements} We want to express our sincere gratitude to our advisor, Professor Gang Tian, for his
helpful suggestions, patient guidance and revision of the earlier verison. We thank Yanan Ye for valuable comments and discussions on the earlier draft of this paper. We are also grateful to Wenshuai Jiang for suggesting this problem. We would also like to thank the referees for careful reading and valuable suggestions, which help us to improve the presentation of this paper and strengthen our results. Authors are supported by National Key R\&D Program of China 2020YFA0712800.

\section{Triple warped products}
\label{sectionlocalmodel}
 
In this section, we recall the Ricci curvature of triple warped products.
 
 Let $\varphi ,\phi , \rho $ be smooth nonnegative functions on $[0,\infty )$ such that $\varphi ,\phi , \rho $ are positive on $(0,\infty )$,
 \begin{eqnarray} \label{conditionphirho}
 \phi (0) >0 ,\; \phi^{\mathrm{(odd)}} (0) =0 ,\; \rho (0) >0 ,\; \rho^{\mathrm{(odd)}} (0) =0 ,
 \end{eqnarray}
and
\begin{eqnarray} \label{conditionvarphi}
\varphi (0) =0 ,\; \varphi' (0)= 1 ,\; \varphi^{\mathrm{(even)}} (0) =0 .
\end{eqnarray}
Then we can define a Riemannian metric on $ \mathbb{R}^{m+1} \times \sphere^n \times \sphere^2 $ by
$$ g_{\varphi , \phi ,\rho } (r) = dr^2 + \varphi (r)^2 g_{\sphere^m} + \phi (r)^2 g_{\sphere^n} + \rho (r)^2 g_{\sphere^2}  .$$

See also \cite[Proposition 1.4.7]{MR3469435} for more details.

Write $X_0 =\frac{\partial }{\partial r} $, $X_1 \in T{\sphere^m} $, $X_2 \in T{\sphere^n} $, and $X_3 \in T{\sphere^2} $. Then the Ricci curvature of $g_{\varphi ,\phi ,\rho}$ can be expressed as following.

\begin{lemma}\label{lmm-riccicurvature}
Let $\varphi ,\phi , \rho $ and $g_{\varphi , \phi ,\rho }$ be as above. Then the Ricci curvature tensors of $g_{\varphi , \phi ,\rho } $ can be determined by
\begin{eqnarray}
\Ric_{g_{\varphi , \phi ,\rho } } \left(X_0 \right) & = & - \left( m \frac{\varphi''}{\varphi} + n \frac{\phi''}{\phi} + 2 \frac{ \rho''}{\rho } \right) X_0 ,\\
\Ric_{g_{\varphi , \phi ,\rho } } \left(X_1 \right) & = & \left[-  \frac{\varphi''}{\varphi} +(m-1) \frac{1-(\varphi')^2 }{\varphi^2} - n \frac{\varphi' \phi'}{\varphi \phi } - 2 \frac{\varphi' \rho'}{\varphi \rho } \right] X_1 ,\\
\Ric_{g_{\varphi , \phi ,\rho } } \left(X_2 \right) & = & \left[-  \frac{\phi''}{\phi} +(n-1) \frac{1-(\phi')^2 }{\phi^2} - m \frac{\varphi' \phi'}{\varphi \phi } - 2 \frac{\phi' \rho'}{\phi \rho } \right] X_2 ,\\
\Ric_{g_{\varphi , \phi ,\rho } } \left(X_3 \right) & = & \left[-  \frac{\rho''}{\rho} + \frac{1-(\rho')^2 }{\rho^2} - m \frac{\varphi' \rho'}{\varphi \rho } - n \frac{\phi' \rho'}{\phi \rho } \right] X_3 .
\end{eqnarray}
\end{lemma}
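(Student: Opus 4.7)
The plan is to treat $g_{\varphi,\phi,\rho}$ as a multiply warped product over the one-dimensional base $(0,\infty)_r$ with spherical fibers $\sphere^m,\sphere^n,\sphere^2$ and warping functions $\varphi,\phi,\rho$, and then to iterate the classical single warped-product computation (as in \cite[Proposition 1.4.7]{MR3469435}) across the three factors.

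First I would introduce an adapted local orthonormal frame: $E_0=\partial_r$; $\{E_\alpha\}_{\alpha=1}^m$ obtained by rescaling a local orthonormal frame on the unit $\sphere^m$ by $\varphi\iv$; and analogously $\{E_\beta\}$, $\{E_\gamma\}$ on $\sphere^n$ and $\sphere^2$ using $\phi\iv$ and $\rho\iv$. The Lie brackets of this frame are easy to compute: $[E_0,E_\alpha]=-(\varphi'/\varphi)E_\alpha$ and the analogues with $\phi,\rho$; mixed-fiber brackets vanish; intra-fiber brackets come from the rescaled spherical frames. Feeding these into the Koszul formula gives the standard warped-product connection coefficients, in particular $\nabla_{E_0}E_\alpha=0$, $\nabla_{E_\alpha}E_0=(\varphi'/\varphi)E_\alpha$, $\nabla_{E_\alpha}E_{\alpha'}=-(\varphi'/\varphi)\delta_{\alpha\alpha'}E_0+\text{(intrinsic $\sphere^m$ part)}$ for same-factor indices, and vanishing tangential part for $\nabla_{E_\alpha}E_\beta$ across distinct fibers (with analogues for $\phi,\rho$).

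Next I would plug these connection formulas into $R(X,Y)Z=\nabla_X\nabla_Y Z-\nabla_Y\nabla_X Z-\nabla_{[X,Y]}Z$ to read off the three relevant sectional curvatures: $K(E_0,E_\alpha)=-\varphi''/\varphi$ for a radial-fiber plane; $K(E_\alpha,E_{\alpha'})=(1-(\varphi')^2)/\varphi^2$ for two orthogonal directions in the same $\sphere^m$ factor (Gauss' equation applied to the sphere of radius $\varphi$); and $K(E_\alpha,E_\beta)=-(\varphi'\phi')/(\varphi\phi)$ for two directions in distinct factors, with analogues under permutation of $\varphi,\phi,\rho$. Summing each sectional curvature over an orthonormal basis complementary to the chosen direction, with multiplicities $m-1,n-1,1$ within a single spherical factor and $m,n,2$ across factors, then assembles into the four displayed Ricci identities.

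The main obstacle is not conceptual but combinatorial: the multiplicities and signs in the cross-fiber contributions have to be tracked carefully, and one should check that the boundary conditions \eqref{conditionphirho}--\eqref{conditionvarphi} on $\varphi,\phi,\rho$ guarantee the smooth extension of the frame and of the computed tensors across $r=0$, so that the identities hold globally on $\rean^{m+1}\times\sphere^n\times\sphere^2$ rather than merely on $\{r>0\}$.
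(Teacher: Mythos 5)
Your proposal is correct and follows essentially the same route as the paper, whose proof simply defers this standard multiply-warped-product computation to Petersen's book; your frame, Koszul-formula, and sectional-curvature bookkeeping (with the cross-fiber curvatures $-\varphi'\phi'/(\varphi\phi)$ etc.) reproduces exactly the stated formulas. The only point worth making explicit is that the same computation shows the curvature tensor has no mixed components in this adapted frame, so the Ricci endomorphism is indeed diagonalized by $X_0,\dots,X_3$ as the lemma asserts, not merely that the diagonal entries have the stated values.
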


\begin{proof}
One can conclude it by a straightforward calculation. See also \cite[Subsection 4.2.4]{MR3469435}.
\end{proof}

\begin{lemma}\label{lem-linear-Ricci}
Define a Riemannian metric on $ (0,\infty)\times\sphere^m \times \sphere^n \times \sphere^2 $ by
\begin{align*}
g_{\varphi , \phi ,\rho } (r) = dr^2 + \varphi (r)^2 g_{\sphere^m} + \phi (r)^2 g_{\sphere^n} + \rho^2(r) g_{\sphere^2}.
\end{align*}
If $\varphi(r)=a_1r+b_1$ and $\phi(r)=\rho(r)=a_2r+b_2$,
then the Ricci curvature is
\begin{eqnarray*}
&\Ric(g_1)_{00} &=0
,\\
&\Ric(g_1)_{11} &=
\frac{[(m-1)-(m+n+1)a_1^2]a_2r
+(m-1)b_2-[(m-1)a_1b_2+(n+2)a_2b_1]a_1}
{(a_1r+b_1)^2(a_2r+b_2)}
,\\
&\Ric(g_1)_{22} & =
\frac{[(n-1)-(m+n+1)a_2^2]a_1r
+(n-1)b_1-[(n+1)a_2b_1+ma_1b_2]a_2}
{(a_1r+b_1)(a_2r+b_2)^2}
,\\
&\Ric(g_1)_{33} & =
\frac{[1-(m+n+1)a_2^2]a_1r
+b_1-[(n+1)a_2b_1+ma_1b_2]a_2}
{(a_1r+b_1)(a_2r+b_2)^2}
\end{eqnarray*}
\end{lemma}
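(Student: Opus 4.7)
The plan is essentially to specialize the general Ricci formulas of Lemma \ref{lmm-riccicurvature} to the affine profile functions $\varphi(r)=a_1r+b_1$ and $\phi(r)=\rho(r)=a_2r+b_2$, and then rewrite each component over a single common denominator so that the dependence on $r$ becomes transparent.

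First I would record the trivial simplifications: since all three warping functions are affine, we have $\varphi''=\phi''=\rho''\equiv 0$, together with $\varphi'=a_1$ and $\phi'=\rho'=a_2$. Substituting these into the first formula of Lemma \ref{lmm-riccicurvature} instantly gives $\Ric(g_1)_{00}=0$. Moreover, since $\phi\equiv\rho$, the mixed terms $\phi'\rho'/(\phi\rho)$ and $\varphi'\phi'/(\varphi\phi)$, $\varphi'\rho'/(\varphi\rho)$ simplify to $a_2^2/(a_2r+b_2)^2$ and $a_1a_2/((a_1r+b_1)(a_2r+b_2))$ respectively. At this point each of $\Ric(g_1)_{11}$, $\Ric(g_1)_{22}$, $\Ric(g_1)_{33}$ is already a sum of two rational functions of $r$.

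Next I would put each of these expressions over the appropriate common denominator, namely $(a_1r+b_1)^2(a_2r+b_2)$ for $\Ric_{11}$ and $(a_1r+b_1)(a_2r+b_2)^2$ for $\Ric_{22}$ and $\Ric_{33}$. Expanding the resulting numerators and separating the coefficients of $r$ from the constant terms is purely mechanical; for instance, in the $X_1$ component the coefficient of $r$ collects to $[(m-1)-(m-1)a_1^2-(n+2)a_1^2]a_2=[(m-1)-(m+n+1)a_1^2]a_2$, which matches the claimed formula, and the constant term groups into $(m-1)b_2 - [(m-1)a_1 b_2+(n+2)a_2 b_1]a_1$. The arguments for $\Ric_{22}$ and $\Ric_{33}$ are identical in structure; the coefficient combinations $(n-1)+m+2=m+n+1$ and $1+m+n=m+n+1$ are what produce the uniform factor $(m+n+1)a_2^2$ seen in the stated expressions.

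There is no genuine obstacle here: the argument is a direct substitution followed by a short algebraic tidying. The only thing to watch is sign and index bookkeeping when combining the three negative contributions in each $\Ric_{ii}$, particularly the distinction between $(n+2)$ appearing in $\Ric_{11}$ (from $n+2$ copies of $a_2$-type derivatives) and $(n+1)$ appearing in $\Ric_{22}$, $\Ric_{33}$ (from $n+2$ such copies minus the diagonal term already accounted for). Once the numerators are organized as $[\text{coefficient}]\cdot r+[\text{constant}]$, the stated identities follow.
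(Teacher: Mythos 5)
Your proposal is correct and is exactly the paper's approach: the paper's proof of this lemma is the one-line statement that it follows from Lemma \ref{lmm-riccicurvature}, and your substitution of the affine profiles followed by clearing denominators is just that computation carried out explicitly (I checked the coefficient collections $(m-1)-(m+n+1)a_1^2$, $(n-1)-(m+n+1)a_2^2$, and the constant terms, and they all match).
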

\begin{proof}
    It follows from Lemma \ref{lmm-riccicurvature}.
\end{proof}
\section{Construction of the local model spaces}

In this section, we construct some local model spaces by triple warped products. For the remainder of the paper, we will implicitly assume that whenever a constraint on a parameter is stated, e.g. $\delta_1\leqs \delta_1(m,n,\delta)$, a possibly stronger constraint of the same form holds in the sequel whenever 
needed.

\subsection{Model I}

At first, we construct a metric $g_{\varphi ,\phi ,\rho} $ on $ (0,\infty )\times \sphere^{m} \times \sphere^{n} \times \sphere^{2} $ such that the asymptotic cone is $C(\sphere^m_{k } \times \sphere^n_{k } )$, and the topology near $r=0$ is homeomorphic to $\mathbb{R}^{m+1} \times \sphere^n \times \sphere^2 $.

\begin{lemma}\label{lmm-localmodel1}
Let $ m ,n \geqs 2 $. For any $0< \epsilon \leqs \frac{1}{100} $, $0<\delta \leqs \delta_0 (m,n,\epsilon) $ and $0<k<k_0(m,n)$, there exist constants $R(m,n,\epsilon,\delta,k)>0$ and positive functions $ \varphi ,\phi ,\rho $ on $(0,\infty )$, such that
\begin{equation*}
    \begin{array}{lll}
       \varphi |_{(0,1 )} = (1-\epsilon)r, & \phi |_{(0,1)} = \delta, &  \rho'|_{(0,1)}=0,\\
       \varphi|_{[R,+\infty )} = kr,  & \phi|_{[R,+\infty )} =kr, &\rho'|_{[R,+\infty)} =0,
    \end{array}
\end{equation*}
and $\Ric_{g_{\varphi ,\phi ,\rho}} \geqs 0$.
\end{lemma}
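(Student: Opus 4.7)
The plan is to construct $\varphi, \phi, \rho$ piecewise on $(0, \infty)$: the boundary regions $(0, 1)$ and $[R, \infty)$ are prescribed, so the work consists of interpolating smoothly on $[1, R]$ while keeping $\Ric \geqs 0$. First I would verify $\Ric \geqs 0$ on the two extreme regions using Lemma \ref{lmm-riccicurvature} directly. On $(0,1)$, all second derivatives vanish and $\phi', \rho'$ are zero, so $\Ric(X_0) = 0$, $\Ric(X_1) = (m-1)(1-(1-\epsilon)^2)/((1-\epsilon)^2 r^2) > 0$, $\Ric(X_2) = (n-1)/\delta^2 > 0$, and $\Ric(X_3) = 1/\rho_0^2 > 0$. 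On $[R, \infty)$, with $\varphi = \phi = kr$ and $\rho$ constant, the formula gives $\Ric(X_i) \propto (m-1) - (m+n-1)k^2$ for $i=1$ and analogously for $i=2$, fixing $k_0(m,n) = \sqrt{(n-1)/(m+n-1)}$; $\Ric(X_0) = 0$ and $\Ric(X_3) = 1/\rho^2 > 0$ are automatic.

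On the transition interval $[1, R]$ I would proceed in three stages. \textbf{Stage A (inflation of $\phi$):} on a short interval $[1, r_1]$, keep $\varphi(r) = (1-\epsilon)r$, let $\phi$ grow with a convex profile (for instance $\phi(r) = \delta \cosh(\alpha(r-1))$ with $\alpha = \alpha(k)$ chosen so that $\phi'/\phi$ reaches $k/(kr_1+O(1))$ at $r_1$), and simultaneously let $\rho$ decrease concavely from $\rho_0$ to $\rho_1$. Here the positive $\phi''/\phi$ term in $\Ric(X_0)$ is absorbed by the positive contribution $-2\rho''/\rho$ coming from $\rho'' < 0$; the analogous danger in $\Ric(X_2)$ is handled by the huge term $(n-1)/\phi^2 \sim (n-1)/\delta^2$ which dominates everything else provided $\delta$ is chosen small enough in terms of $\epsilon$, $k$, $m$, $n$. \textbf{Stage B (bending $\varphi$):} on $[r_1, r_2]$, take $\phi(r) = kr + c$ (so $\phi'' = 0$) and $\rho \equiv \rho_1$, and transition $\varphi$ from slope $1-\epsilon$ down to slope $k$ using a concave profile. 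Now $\Ric(X_0) = -m\varphi''/\varphi \geqs 0$ automatically, $\Ric(X_1)$ benefits from the positive $-\varphi''/\varphi$ and from $(m-1)(1-(\varphi')^2)/\varphi^2 > 0$ since $\varphi' \in [k, 1-\epsilon] \subset (0,1)$, and the cross-terms $-n\varphi'\phi'/(\varphi\phi)$ in $\Ric(X_1), \Ric(X_2)$ are controlled because the slopes are small. \textbf{Stage C (matching):} on $[r_2, R]$, fine-tune so that $\varphi(R)=kR$, $\varphi'(R)=k$, and the corresponding values for $\phi$, $\rho$ are matched with vanishing second derivatives at $R$, yielding $C^{1,1}$ gluing (which is enough for Ricci bounds, or a small mollification can be applied at each junction).

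The main obstacle is Stage A. The positive term $(n-1)/\phi^2$ that saves $\Ric(X_2)$ decays as $\phi$ grows, so the inflation must be completed while $\phi$ is still comparable to $\delta$. Quantitatively, one needs $\phi''/\phi \lesssim (n-1)/\phi^2$ throughout $[1, r_1]$, i.e.\ $\phi \phi'' \lesssim n-1$, which couples the rate and duration of the inflation to the choice of $\delta$. At the same time, the concavity budget in $\rho$ must be sufficient to dominate $n\phi''/\phi$ in $\Ric(X_0)$; this pins down $\rho_0/\rho_1$ and the size of the interval $[1, r_1]$. Once these estimates are established, the hierarchy of constants $\delta \ll \rho_1 \ll \rho_0$ and $r_1, r_2, R$ chosen depending on $m, n, \epsilon, \delta, k$ gives the required metric, and the lemma follows.
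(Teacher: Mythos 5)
Your endpoint checks and Stages B--C are fine, and you have correctly located the difficulty in getting $\phi'$ from $0$ up to $k$; but Stage A, as designed, cannot work, and the failure is not a matter of tuning constants. Suppose on $[1,r_1]$ you keep $\varphi=(1-\epsilon)r$, raise $\phi'$ from $0$ to (essentially) $k$ while $\phi$ stays comparable to $\delta$, and pay for this in $\Ric(X_0)=-m\varphi''/\varphi-n\phi''/\phi-2\rho''/\rho$ with the concavity of $\rho$. Since $\varphi''=0$ there, you need $-2\rho''/\rho\geq n\phi''/\phi$ pointwise; integrating, and using $(\phi'/\phi)'=\phi''/\phi-(\phi'/\phi)^2$ together with $\phi'(1)=\rho'(1)=0$, gives
\begin{equation*}
\frac{2|\rho'(r_1)|}{\rho(r_1)}\;\geq\;\int_1^{r_1}\frac{-2\rho''}{\rho}\;\geq\;\int_1^{r_1}\frac{n\phi''}{\phi}\;\geq\;\frac{n\phi'(r_1)}{\phi(r_1)}\;\geq\;\frac{nk}{C\delta}.
\end{equation*}
This blows up as $\delta\to0$, and it is scale-invariant in $\rho$, so no choice of $\rho_0,\rho_1$ (i.e.\ of the "budget" $\rho_0/\rho_1$) helps. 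Now $\rho'/\rho$ must climb back to $0$ by $r=R$, and on $[r_1,R]$ your $\phi$ is (net) convex, so wherever $\rho''>0$ the same Ricci condition forces $2\rho''/\rho\leq m(-\varphi'')/\varphi$. Using $(\rho'/\rho)'\leq(\rho'')^{+}/\rho$, that $\varphi$ is increasing with $\varphi\geq\varphi(1)=1-\epsilon$, and that $\varphi'$ decreases monotonically from $1-\epsilon$ to $k$, one gets
\begin{equation*}
\frac{2|\rho'(r_1)|}{\rho(r_1)}\;=\;2\left[\frac{\rho'}{\rho}\right]_{r_1}^{R}\;\leq\;\int_{r_1}^{R}\frac{2(\rho'')^{+}}{\rho}\;\leq\;\int_{r_1}^{R}\frac{m(-\varphi'')^{+}}{\varphi}\;\leq\;\frac{m(1-\epsilon-k)}{1-\epsilon}\;<\;m.
\end{equation*}
Comparing the two displays forces $\delta\gtrsim nk/m$. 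But the lemma must hold for every $\delta\leq\delta_0(m,n,\epsilon)$ with $\delta_0$ independent of how small $\delta$ actually is, and in the application (Lemma \ref{lmm-unit}) one takes $\delta\sim L^{-1}\to0$ with $k$ fixed. So the one-shot concave dip of $\rho$ has recovery capacity at most $m$ (measured in $\rho'/\rho$) and cannot pay a bill of size $nk/\delta$; your "concavity budget" heuristic tracks the wrong quantity ($\rho_0-\rho_1$ rather than $\rho'/\rho$).

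The paper's proof avoids this by never asking $\phi'$ to jump to $k$ while $\phi$ is small. In Step 1 it raises $\phi'$ (and $\rho'$) only to a tiny slope $\delta_1$ with $(n+2)\delta_1/\delta$ dominated by the concavity $m(-\varphi'')/\varphi$ released as $\varphi'$ is bent from $1-\epsilon$ down to $k$ on $[R_1/10,10R_1]$; then $\phi$ grows linearly with slope $\delta_1$. In Step 2 it caps $\rho$ to the concave power profile $ar^{s}$, which supplies $-2\rho''/\rho=2s(1-s)r^{-2}$ at \emph{every} later radius. Step 3 is the key move you are missing: the remaining slope transfer $\phi'\colon\delta_1\to k$ is spread over an exponentially long interval $[R_3,\Lambda R_3]$ with $r\phi''\leq(\ln R_3)^{-1}$, so that $n\phi''/\phi\leq n(\delta_1\ln R_3)^{-1}r^{-2}$ is pointwise absorbed by $2s(1-s)r^{-2}$ once $R_3$ is enormous. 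Step 4 then flattens $\rho$ to a constant. In short, the only renewable source of positivity in $\Ric(X_0)$ is a factor behaving like a small power of $r$, and the slope of $\phi$ must be raised logarithmically slowly against it; this is the content of Steps 2--4 of the paper and has no counterpart in your outline.
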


\begin{proof}
	We denote $M=\mathbb{R}^{m+1} \times \sphere^n \times \sphere^2$.
Let us begin with the initial metric
\begin{align*}
g_0(r) = dr^2 
+(1-\epsilon)^2r^2g_{\sphere^m} 
+\delta^2 g_{\sphere^n} 
+\delta^2 g_{\sphere^2},
\end{align*}

\textbf{Step1: Constructing $g_1$.}
Set $U_1=\{r\leqs 2\}$ and $R_1=100$.
We will define a metric $g_1$ by modifying $g_0$ on $M\backslash U_1$ through the ansatz
\begin{align*}
g_1(r) = dr^2 
+\varphi(r)^2 g_{\sphere^m} 
+\phi(r)^2 g_{\sphere^n} 
+\rho(r)^2 g_{\sphere^2}.
\end{align*}
Let $0<\delta_1(m,n,\delta,k)<\frac{1}{2}k<1$ be a constant to be determined later.
By smoothing out the function $\max\{\delta,\delta+\delta_1(r-R_1)\}$ near $r=R_1$, we can build a smooth function $\phi(r)$
\begin{equation*}
\phi(r)=
\begin{cases}
\delta 
&\textrm{if }\quad r\leqs10^{-1}R_1
\\
0<R_1 \phi''<\delta_1
&\textrm{if }\quad 10^{-1}R_1\leqs r\leqs10R_1
\\
\delta+\delta_1(r-R_1)
&\textrm{if }\quad 10R_1\leqs r.
\end{cases}
\end{equation*}
By smoothing out the function $\max\{\delta,\delta+\delta_1(r-R_1)\}$ near $r=R_1$, we can build a smooth function $\rho(r)$ satisfying
\begin{equation*}
\rho(r)=
\begin{cases}
\delta 
&\textrm{if }\quad r\leqs10^{-1}R_1
\\
0<R_1 \rho''<\delta_1 
&\textrm{if }\quad 10^{-1}R_1\leqs r\leqs10R_1
\\
\delta+\delta_1 (r-R_1)
&\textrm{if }\quad 10R_1\leqs r,
\end{cases}
\end{equation*}
respectively.
Similarly, by smoothing out the function $\min\{(1-\epsilon)r,(1-\epsilon)R_1+k(r-R_1)\}$ near $r=R_1$, we can build a smooth function $\varphi(r)$ satisfying
\begin{equation*}
\varphi(r)=
\begin{cases}
(1-\epsilon)r 
&\textrm{if }\quad r\leqs 20^{-1}R_1
\\
\varphi''<0 
&\textrm{if }\quad 
20^{-1}R_1\leqs r\leqs 10^{-1}R_1
\\
R_1 \varphi''<-\frac{1-\epsilon-k}{100}
&\textrm{if }\quad 10^{-1}R_1\leqs r\leqs10R_1
\\
(1-\epsilon)R_1+k(r-R_1) &\textrm{if }\quad 10R_1\leqs r
\end{cases}
\end{equation*}

For $r\leqs 10^{-1}R_1$, we have $k\leqs\varphi'\leqs1-\epsilon$ since $\varphi''\leqs0$.
And then
\begin{eqnarray*}
&\Ric(g_1)_{00} &= - m \frac{\varphi''}{\varphi} \geqs 0
,\\
&\Ric(g_1)_{11} &\geqs -\frac{\varphi''}{\varphi} +(m-1) \frac{\epsilon(2-\epsilon)}{\varphi^2} \geqs 0
,\\
&\Ric(g_1)_{22} & =\frac{n-1}{\delta^2}\geqs 0
,\\
&\Ric(g_1)_{33} & =\frac{1}{\delta^2}\geqs 0,
\end{eqnarray*}
where $\Ric(g_1)_{ii}=\Ric_{g_1}(X_i,X_i)$ defined in Lemma \ref{lmm-riccicurvature}.

To estimate the Ricci curvature in the interval $[R_1/10,10R_1]$, we will use the following facts
\begin{align*}
0<\rho''<\frac{\delta_1}{R_1}
,\quad
0\leqs\rho'\leqs\delta_1
,\quad
\delta\leqs\rho\leqs\delta+9\delta_1R_1
,
\end{align*}

\begin{align*}
0<\phi''<\frac{\delta_1}{R_1}
,\quad
0\leqs\phi'\leqs \delta_1
,\quad
\delta\leqs\phi\leqs\delta+9\delta_1 R_1
,
\end{align*}

\begin{align*}
\frac{1-\epsilon-k}{100R_1}<-\varphi''
,\quad
k\leqs\varphi'\leqs1-\epsilon
,\quad
\frac{(1-\epsilon)R_1}{20}\leqs\varphi\leqs(1-\epsilon+9k)R_1
.
\end{align*}

Then we have
\begin{eqnarray*}
&\Ric(g_1)_{00} &\geqs
\frac{1}{R_1}
\left[
\frac{m(1-\epsilon-k)}{100R_1(1-\epsilon+9k)}
-\frac{(n+2)\delta_1}{\delta}
\right]
,\\
&\Ric(g_1)_{11} &\geqs
\frac{1}{R_1}
\left[
\frac{1-\epsilon-k}{100R_1(1-\epsilon+9k)}
+\frac{(m-1)\epsilon(2-\epsilon)}{R_1(1-\epsilon+9k)^2}
-\frac{20(n+2)\delta_1}{(1-\epsilon)\delta}
\right]
,\\
&\Ric(g_1)_{22} &\geqs
\frac{(n-1)(1-\delta_1^2)}{(\delta+9\delta_1R_1)^2}
-\frac{\delta_1}{R_1\delta}
(20m+1+2\frac{R_1\delta_1}{\delta})
,\\
&\Ric(g_1)_{33} &\geqs
\frac{1-\delta_1^2}{(\delta+9\delta_1R_1)^2}
-\frac{\delta_1}{R_1\delta}
(20m+1+n\frac{R_1\delta_1}{\delta})
.
\end{eqnarray*}
If $0<k<10^{-2}$, $\delta_1\leqslant \delta_1(m,n,\delta)$, then we have $\Ric\geqs0$ in $[10^{-1}R_1,10R_1]$.

Apply Lemma \ref{lem-linear-Ricci}, where $a_1=k$, $a_2=\delta_1$, $b_1=R_1(1-\epsilon-k)$, $b_2=\delta-R_1\delta_1$, then we know that the Ricci curvature is non-negative for all $r>0$ if $k\leqs k(m,n)$, $\delta_1\leqs\delta_1(m,n,\delta)$.

Now we build a metric $g_1$ satisfying the initial condition we stated and have the property that
\begin{align*}
g_1(r) = dr^2 
+\left[kr+R_1(1-\epsilon-k) \right]^2
g_{\sphere^m} 
+\left(\delta_1r+\delta-R_1\delta_1 \right)^2 g_{\sphere^n} 
+\left(\delta_1r+\delta-R_1\delta_1 \right)^2 g_{\sphere^2},
\end{align*}
for $r\geqs10R_1$.

\textbf{Step2: Constructing $g_2$.}
Set $R_2=10^3R_1$, and $U_2=\{r\leqs 10 R_1\}$. We will define a metric $g_2$ by modifying $g_1$ on $M\backslash U_2$ through the ansatz
\begin{align*}
g_2(r) = dr^2 
+\left[kr+R_1(1-\epsilon-k) \right]^2
g_{\sphere^m} 
+\left(\delta_1r+\delta-R_1\delta_1 \right)^2 g_{\sphere^n} 
+\rho(r)^2 g_{\sphere^2},
\end{align*}

For $0<s<<1$ to be determined later, we consider $\rho(r)$ by smoothing the function $\min\{\delta_1r+\delta-R_1\delta_1,(\delta_1R_2+\delta-R_1\delta_1)(\frac{r}{R_2})^s)\}$ at $R_2$  with the following properties
\begin{equation*}
\rho(r)=
	\begin{cases}
		\delta_1r+\delta-R_1\delta_1 & \text{if } r\leqs10^{-1}R_2,\\
		\rho''\leqs0 & \text{if }10^{-1}R_2\leqs r\leqs 10R_2,\\
		ar^s & \text{if }r\geqs 10R_2,
	\end{cases}
\end{equation*}
where $a=(\delta_1R_2+\delta-R_1\delta_1)R_2^{-s}$.

For $r\in [10^{-1}R_2,10R_2]$, we have
\begin{align*}
	\rho''\leqs0,\quad  sa(10R_2)^{s-1}\leqs \rho'\leqs \delta_1,\quad \delta\leqs\rho\leqs 20\delta.
\end{align*}
Then we have
\begin{align*}
	\Ric(g_2)_{00}&=-2\frac{\rho''}{\rho}\geqs0,\\
	\Ric(g_2)_{11}&\geqs (m-1)\frac{1-k^2}{\varphi^2}-(n+2)\frac{k\delta_1}{\delta\varphi}\geqs\varphi^{-2}\left[\frac{m-1}{2}-k(n+2)(10kR_2+R_1)\frac{\delta_1}{\delta}\right],\\
	\Ric(g_2)_{22}&\geqs (n-1)\frac{1-\delta_1^2}{\phi^2}-\frac{mk\delta_1}{\phi}-\frac{2\delta_1^2}{\delta\phi}\geqs \phi^{-2}\left[\frac{n-1}{2}-(mk+2\frac{\delta_1}{\delta})\delta_1(10\delta_1R_2+\delta)\right],\\
	\Ric(g_2)_{33}&\geqs \frac{1-\delta_1^2}{\rho^2}-\frac{mk\delta_1}{\rho}-\frac{n\delta_1^2}{\delta\rho}\geqs \rho^{-2}\left[\frac12-20(mk\delta_1\delta+n\delta_1^2)\right].
\end{align*}
So $\Ric\geqs0$ for $r\in [10^{-1}R_2,10R_2]$ if $\delta_1/\delta\leqs c(m,n)$.

For $r\geqs 10R_2$,
\begin{align*}
	\Ric(g_2)_{00}&=-2\frac{\rho''}{\rho}\geqs0,\\
	\Ric(g_2)_{11}&\geqs (m-1)\frac{1-k^2}{\varphi^2}-\frac{nk}{r\varphi}-\frac{2sk}{r\varphi}\geqs (r\varphi^2)^{-1}\left[\frac{m-1}{2}r-k(n+2s)(kr+R_1)\right],\\
	\Ric(g_2)_{22}&\geqs (n-1)\frac{1-\delta_1^2}{\phi^2}-\frac{m\delta_1}{r\phi}-\frac{2s\delta_1}{r\phi}\geqs (r\phi^2)^{-1}\left[\frac{n-1}{2}r-\delta_1(m+2s)(\delta_1r+\delta)\right],\\
	\Ric(g_2)_{33}&\geqs \frac{s(1-s)}{r^2}+\frac{1-\delta_1^2}{a^2r^{2s}}-\frac{ms}{r^2}-\frac{ns}{r^2}\geqs \frac{R_2^{2-2s}}{2a^2r^2}-\frac{(m+n)s}{r^2}.\end{align*}
If $0<s<s(m,n)$, $\Ric(g_2)\geqs0$ for all $r>0$.

Now we build a metric $g_2$ satisfying the initial condition we stated and have the property that
\begin{align*}
g_2(r) = dr^2 
+\left[kr+R_1(1-\epsilon-k) \right]^2
g_{\sphere^m} 
+\left(\delta_1r+\delta-R_1\delta_1 \right)^2 g_{\sphere^n} 
+\left(ar^s\right)^2 g_{\sphere^2},
\end{align*}
for $r\geqs10R_2$.

\textbf{Step3: Constructing $g_3$.}
Set $U_3=\{r\leqs 10 R_2\}$. We will define a metric $g_3$ by modifying $g_2$ on $M\backslash U_3$ through the ansatz
\begin{align*}
g_3(r) = dr^2 
+\varphi(r)^2
g_{\sphere^m} 
+\phi(r)^2 g_{\sphere^n} 
+\left(ar^s\right)^2 g_{\sphere^2}.
\end{align*}

For $R_3=R_3(m,n,\delta,\delta_1,k,s)$, $R_4=\Lambda R_3$, $\Lambda=\Lambda(R_3)$, we will choose smooth functions $\varphi(r)$ and $\phi(r)$ satisfying



\begin{equation*}
\varphi(r)=
\begin{cases}
kr+b_1
&\textrm{if }\quad r\leqs R_3
\\
kr\leqs \varphi\leqs kr+b_1, |\varphi'|<2k, r\varphi''< (\ln R_3)\iv
&\textrm{if }\quad R_3\leqs r\leqs R_4
\\
kr
&\textrm{if }\quad R_4\leqs r.
\end{cases}
\end{equation*}

\begin{equation*}
\phi(r)=
\begin{cases}
\delta_1r+b_2
&\textrm{if }\quad r\leqs R_3
\\
\delta_1 r\leqs\phi\leqs kr, |\phi'|<2k, r\phi''<(\ln R_3)\iv, |\phi'/\phi|<10 r\iv
&\textrm{if }\quad R_3\leqs r\leqs R_4
\\
kr
&\textrm{if }\quad R_4\leqs r,
\end{cases}
\end{equation*}
respectively, where $b_1=R_1(1-\epsilon-k)$, $b_2=\delta-R_1\delta_1$.

For the existence of $\varphi(r)$, we can smooth the continuous function
\begin{equation*}
\hat{\varphi}(r)=
\begin{cases}
kr+b_1
&\textrm{if }\quad r\leqs R_3
\\
\max\{kr-c\left(r\ln r-(\ln R_3+1)r\right)+b_3,kr\}
&\textrm{if }\quad R_3\leqs r,
\end{cases}
\end{equation*}
where $b_3$ is the constant such that $\hat{\varphi}$ is continuous at $R_3$, and $c=(10\ln R_3)\iv$. Let $f:=-c\left(r\ln r-(\ln R_3+1)r\right)+b_3$, then we have $f(R_3)=b_1\leqs100$, $f'(R_3)=0$, $rf''=-c$. If $f(\lambda R_3)=0$, then
\begin{align*}
    c\left(\lambda R_3\ln(\lambda R_3)-(\ln R_3+1)\lambda R_3\right)-c(R_3\ln R_3-(\ln R_3+1)R_3)=b_1,
\end{align*}
which implies
\begin{align*}
    (\lambda\ln \lambda-\lambda+1)\cdot \frac{R_3}{10\ln R_3}=b_1.
\end{align*}
Since $R_3\geqs R_2=10^5$, we have $\lambda\leqs 10$. Moreover,
\begin{align*}
    |f'|\leqs c(\ln (\lambda R_3)-\ln R_3)\leqs (\ln R_3)\iv.
\end{align*}
 So if $\Lambda\geqs 20$, $R_3\geqs R_3(k)$, we can get the desired $\varphi$ by smoothing.

 For the existence of $\phi(r)$, we can smooth the continuous function
 \begin{equation*}
\hat{\phi}(r)=
\begin{cases}
\delta_1r+b_2
&\textrm{if }\quad r\leqs R_3
\\
\delta_1 r+c(r\ln r-(\ln R_3+1)r)+b_4
&\textrm{if }\quad R_3\leqs r\leqs \lambda_1R_3
\\
kr
&\textrm{if }\quad \lambda_1R_3\leqs r,
\end{cases}
\end{equation*}
where $b_4$ is the constant such that $\hat{\phi}$ is continuous at $R_3$, and $\lambda_1$ is the constant such that $\hat{\phi}$ is continuous at $\lambda_1R_3$. So by definition, we have
\begin{align*}
    \delta_1 \lambda_1R_3+c(\lambda_1R_3\ln (\lambda_1R_3)-(\ln R_3+1)\lambda_1R_3)+b_4=k\lambda_1R_3,
\end{align*}
which implies that
\begin{align*}
    c(\ln\lambda_1-1+\lambda_1\iv)+\frac{b_2}{\lambda_1R_3}=k-\delta_1.
\end{align*}
Then $c\ln\lambda_1<c+k-\delta_1<k$ if we choose $R_3\geqs R_3(\delta_1)$. On the other hand, we have
\begin{align*}
    c\ln\lambda_1=k-\delta_1+\frac{cR_3(\lambda_1-1)-b_2}{\lambda_1R_3}>k-\delta_1.
\end{align*}
Thus, 
\begin{align*}
    \hat{\phi}'_-(\lambda_1R_3)=c\ln\lambda_1+\delta_1\in(k,k+\delta_1),\quad \lambda_1\leqs R_3^{10k}\leqs \sqrt{R_3}.
\end{align*}
The last condition follows from
\begin{align*}
    \frac{\hat{\phi}'}{\hat{\phi}}&=\frac{c(\ln r-\ln R_3)+\delta_1}{c(r\ln r-(\ln R_3+1)r)+\delta_1r+b_4}=r\iv\cdot\frac{1}{1+b_4(r\hat{\phi}')\iv-c(\hat{\phi}')\iv}\\
    &\leqs r\iv \cdot \frac{1}{1-c\delta_1\iv}.
\end{align*}
So if $\Lambda\geqs \sqrt{R_3}$, $R_3\geqs R_3(\delta_1,k)$, there exists $\phi(r)$ as claimed.

For $r\in [R_3,R_4]$,
\begin{align*}
	\Ric(g_3)_{00}&\geqs\frac{2s(1-s)}{r^2}-\frac{m(\ln R_3)\iv}{kr^2}-\frac{n(\ln R_3)\iv}{\delta_1 r^2},\\
	\Ric(g_3)_{11}&\geqs (m-1)\frac{1-4k^2}{\varphi^2}-\frac{(\ln R_3)\iv}{r\varphi}-\frac{20nk}{ r\varphi}-\frac{4ks}{r\varphi}\geqs (r\varphi^2)\iv\left[\frac{m-1}{2}r-100nk(kr+b_1)\right],\\
	\Ric(g_3)_{22}&\geqs (n-1)\frac{1-4k^2}{\phi^2}-\frac{(\ln R_3)\iv}{r\phi}-\frac{20mk}{ r\phi}-\frac{4ks}{r\phi}\geqs (r\phi^2)\iv\left[\frac{n-1}{2}r-100mk^2r\right],\\
	\Ric(g_3)_{33}&\geqs \frac{1-\delta_1^2}{a^2r^{2s}}-\frac{20(m+n)s}{r^2}.\end{align*}
Then $\Ric\geqs0$ for $r\in[ R_3, R_4]$ after choosing enough large $R_3$.

For $r\geqs R_4$,
\begin{align*}
	\Ric(g_3)_{00}&=\frac{2s(1-s)}{r^2}>0,\\
	\Ric(g_3)_{11}&=\frac{(m-1)(1-k^2)}{k^2r^2}-\frac{n+2s}{r^2}>0,\\
	\Ric(g_3)_{22}&=\frac{(n-1)(1-k^2)}{k^2r^2}-\frac{m+2s}{r^2}>0,\\
	\Ric(g_3)_{33}&=\frac{s(1-s)}{r^2}+\frac{1-s^2a^2r^{2s-2}}{a^2r^{2s}}-\frac{s(m+n)}{r^2}>0.\end{align*}

Now we build a metric $g_3$ with $\Ric(g_3)\geqs0$ satisfying the initial condition we stated and have the property that
\begin{align*}
g_3(r) = dr^2 
+(kr)^2
g_{\sphere^m} 
+(kr)^2 g_{\sphere^n} 
+\left(ar^s\right)^2 g_{\sphere^2},
\end{align*}
for $r\geqs R_4$.

\textbf{Step4: Constructing $g_4$.}
Set $U_4=\{r\leqs 10 R_4\}$. We will define a metric $g_4$ by modifying $g_3$ on $M\backslash U_4$ through the ansatz
\begin{align*}
g_4(r) = dr^2 
+(kr)^2
g_{\sphere^m} 
+(kr)^2 g_{\sphere^n} 
+\rho(r)^2 g_{\sphere^2}.
\end{align*}
Then the Ricci curvature of this ansatz is
\begin{align*}
	\Ric(g_4)_{00}&=-\frac{2\rho''}{\rho},\\
	\Ric(g_4)_{11}&=\frac{(m-1)(1-k^2)}{k^2r^2}-\frac{n}{r^2}-\frac{2\rho'}{r\rho},\\
	\Ric(g_4)_{22}&=\frac{(n-1)(1-k^2)}{k^2r^2}-\frac{m}{r^2}-\frac{2\rho'}{r\rho},\\
	\Ric(g_4)_{33}&=-\frac{2\rho''}{\rho}+\frac{1-\rho'^2}{\rho^2}-\frac{(m+n)\rho'}{r\rho}.
\end{align*}
We can choose $\rho(r)$ of the form
\begin{equation*}
\rho(r)=
\begin{cases}
ar^s
&\textrm{if }\quad r\leqs10R_4
\\
\rho''\leqs0
&\textrm{if }\quad 10R_4\leqs r\leqs10^3 R_4
\\
\lambda
&\textrm{if }\quad 10^3R_4\leqs r,
\end{cases}
\end{equation*}
for some $\lambda=\lambda(a,s,R_4)$. Then it's easy to see that $\Ric\geqs0$ for any $r>0$. Moreover, for $R=10^4R_4$, the last metric $g_4$ satisfies all the properties we stated.

\end{proof}

\subsection{Model II}

Next we construct a metric $g_{\varphi ,\phi ,\rho} $ on $ (0,\infty )\times \sphere^{m} \times \sphere^{n} \times \sphere^{2} $ such that the metric around $\infty $ is isometric to $ C(\sphere^m_{1-\epsilon } ) \times \sphere^n_{\delta } \times \sphere^2_{\rho } $

\begin{lemma}\label{lmm-localmodel2}
Let $ m ,n \geqs 2 $. Then for any $0< \epsilon \leqs \frac{1}{100} $, $\lambda>0$, $0<k<k_0(m,n)$, there are constants $R(m,n,k,\epsilon )>0$, $\delta (m,n,k,\epsilon) >0$ and positive functions $ \varphi ,\phi ,\rho $ on $(0,\infty )$, such that
\begin{equation*}
    \begin{array}{lll}
       \varphi |_{(0,1 )} = kr, & \phi |_{(0,1)} = kr, &  \rho|_{(0,1)}=\lambda, \\
       \varphi|_{[R,+\infty )} = (1-\epsilon ) r,  & \phi|_{[R,+\infty )} = \delta, &\rho|_{[R,+\infty)} =\lambda,
    \end{array}
\end{equation*}
and $\Ric_{g_{\varphi ,\phi ,\rho}} \geqs 0$.
\end{lemma}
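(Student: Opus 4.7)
The plan is to construct the three warping functions $\varphi,\phi,\rho$ by a multi-stage procedure that mirrors the four-step construction of Lemma \ref{lmm-localmodel1} but run in the reverse direction: we start at $r=0$ with the Riemannian cone $dr^2+(kr)^2 g_{\sphere^m}+(kr)^2 g_{\sphere^n}+\lambda^2 g_{\sphere^2}$ and must end at infinity with the product $C(\sphere^m_{1-\epsilon})\times\sphere^n_\delta\times\sphere^2_\lambda$. The key new difficulty is that $\rho$ must equal the same constant $\lambda$ at both ends (whereas in Lemma \ref{lmm-localmodel1}, $\rho$ is transitioned only once, from the concave intermediate $ar^s$ to $\lambda$), and that $\varphi$ must increase its slope from $k$ up to $1-\epsilon$, forcing $\varphi''>0$ on a substantial region and hence a negative contribution to $\Ric_{00}$.

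First I would handle the transition of $\phi$ from the linear cone profile $kr$ to the constant $\delta$ on an initial annular region $[R_1,R_2]$, keeping $\varphi=kr$ and $\rho=\lambda$ fixed. This is the mirror of Step 1 of Lemma \ref{lmm-localmodel1} and is the easy direction: on the portions where $\phi'$ is decreasing we have $\phi''\leqs 0$, so $-n\phi''/\phi\geqs 0$ contributes positively to $\Ric_{00}$, and $\Ric_{22}$ gains a favorable cross term $-m\varphi'\phi'/(\varphi\phi)>0$ when $\phi$ is decreasing. The delicate sub-case (when the target $\delta$ must be chosen smaller than $k R_1$) forces $\phi'$ to change sign, so I would split this stage in two, first dropping the slope from $k$ to a small $\delta_1>0$ with $\phi''\leqs 0$, and only later bringing $\phi'$ to $0$ against $\phi=\delta$.

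Next I would introduce an intermediate concave profile $\rho=ar^s$ with $s\in(0,s_0(m,n))$ small, mirroring Step 2 of Lemma \ref{lmm-localmodel1}, and then execute the $\varphi$ transition from slope $k$ to slope $1-\epsilon$ on the range where $\rho''=s(s-1)ar^{s-2}<0$. The concavity of $\rho$ yields $-2\rho''/\rho\geqs 0$, and this is what absorbs the unavoidable positive contribution $m\varphi''/\varphi$ in $\Ric_{00}$ coming from the increase of $\varphi'$. As in Step 3 of Lemma \ref{lmm-localmodel1}, I would use a logarithmic-type interpolation of the form $\varphi=kr+f(r)$ with $rf''\leqs(\ln R_3)^{-1}$, so that the contribution $-m\varphi''/\varphi$ to $\Ric_{00}$ can be made as small as desired relative to $-2\rho''/\rho\sim s(1-s)/r^2$; the matching between $\phi$'s and $\varphi$'s slopes proceeds by the same log-smoothing ansatz. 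Finally I would mirror Step 4 of Lemma \ref{lmm-localmodel1} to bring $\rho$ back down from $ar^s$ to $\lambda$, using a concave profile $\rho''\leqs 0$ (here $\rho'$ decreases from positive to $0$, which is compatible with concavity).

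The main obstacle is the extra ``up'' phase that Lemma \ref{lmm-localmodel1} does not have: to lift $\rho$ from the constant $\lambda$ to the concave profile $ar^s$ one must pass through a short region where $\rho'$ grows from $0$, which means $\rho''>0$ and a \emph{negative} contribution to $\Ric_{00}$. I would resolve this by scheduling this acceleration phase inside the range where $\phi'$ is still decreasing sharply and $\varphi=kr$ is still linear, so that the favorable term $-n\phi''/\phi$ dominates $-2\rho''/\rho$ on that interval; quantitatively this amounts to choosing the acceleration length and the amplitude $a$ such that $n|\phi''|/\phi\geqs 2\rho''/\rho$ pointwise, which is possible because $\phi$ can be made as concave as we wish by spreading its transition over a longer interval. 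Apart from this coordination, each stage's Ricci computation is a direct application of Lemma \ref{lmm-riccicurvature} and reduces, at the endpoints, to estimates of the form already carried out in the proof of Lemma \ref{lmm-localmodel1}, provided we take $k$ small depending on $m,n$, $R$ large, and $\delta$ determined by the construction.
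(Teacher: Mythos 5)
Your overall strategy --- staged warped-product surgery, with a concave $r^s$ profile absorbing the positive $\varphi''$ needed to raise the slope of $\varphi$ from $k$ to $1-\epsilon$ --- is the right one, but you have routed the intermediate profile $ar^s$ through the wrong factor, and this creates a genuine gap. You bring $\phi$ down to the constant $\delta$ first and then ask $\rho$ to carry the concave profile $ar^s$. But the lemma requires $\rho\equiv\lambda$ with the \emph{same} $\lambda$ on $(0,1)$ and on $[R,\infty)$ (this is what allows the block in Lemma \ref{lmm-unit} to be glued). Once $\rho$ has been lifted to the strictly increasing profile $ar^s$, it cannot return to $\lambda$: your final stage, a concave $\rho$ with ``$\rho'$ decreasing from positive to $0$,'' describes an \emph{increasing} function, which lands at a constant strictly larger than $\lambda$, not at $\lambda$. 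If instead you genuinely bring $\rho$ back down to $\lambda$, then $\rho'$ must pass through negative values and return to $0$, forcing $\rho''>0$ on the final leveling-off region; there $\varphi=(1-\epsilon)r$ and $\phi=\delta$ are already exactly linear and constant, so $\Ric_{00}=-2\rho''/\rho<0$ with nothing left to absorb it, no matter how slowly the leveling-off is performed. (Your first ``up'' phase for $\rho$ is at least scheduled against $\phi''<0$, but the terminal one has no such partner in your ordering.)

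The paper's proof avoids this entirely by never touching $\rho$: it keeps $\rho\equiv\lambda$ throughout, so $\Ric_{33}=1/\lambda^2>0$ trivially, and instead holds $\phi$ in the intermediate concave state $kR_1(r/R_1)^s$ while $\varphi$ makes its slow transition ($|r\varphi''|<ks^2$ over an interval of logarithmic length of order $(ks^2)\iv$, so that $-m\varphi''/\varphi$ is dominated by the term $ns(1-s)/r^2$ contributed by $\phi$). Only afterwards is $\phi$ brought down, concavely, from $ar^s$ to the constant $\delta$ --- and since $\delta$ is an output of the lemma rather than an input, the ``delicate sub-case'' you worry about (forcing $\phi'$ to change sign) never arises. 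The repair is a reordering: postpone the descent of $\phi$ to a constant until after the $\varphi$ transition, use $\phi=ar^s$ as the absorber, and leave $\rho$ untouched.
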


\begin{proof}
	Let us begin with the initial metric
\begin{align*}
g_0(r) = dr^2 
+(kr)^2g_{\sphere^m} 
+(kr)^2 g_{\sphere^n} 
+\lambda^2 g_{\sphere^2},
\end{align*}

\textbf{Step1: Constructing $g_1$.}
Set $U_1=\{r\leqs 2\}$ and $R_1=100$.
We will define a metric $g_1$ by modifying $g_0$ on $M\backslash U_1$ through the ansatz
\begin{align*}
g_1(r) = dr^2 
+(kr)^2 g_{\sphere^m} 
+\phi(r)^2 g_{\sphere^n} 
+\lambda^2 g_{\sphere^2}.
\end{align*}

Set $s=\epsilon/(10^6mn)$.
By smoothing out the function $\min\{kr,kR_1^{1-s}r^s\}$ near $r=R_1$, we can build a smooth function $\phi(r)$ of the form
\begin{equation*}
\phi(r)=
\begin{cases}
kr
&\textrm{if }\quad r\leqs10^{-1}R_1
\\
\phi''\leqs0
&\textrm{if }\quad 10^{-1}R_1\leqs r\leqs10R_1
\\
kR_1(\frac{r}{R_1})^s
&\textrm{if }\quad 10R_1\leqs r.
\end{cases}
\end{equation*}

The Ricci curvature of the ansatz is
\begin{align*}
	\Ric(g_1)_{00}&=-\frac{n\phi''}{\phi},\\
	\Ric(g_1)_{11}&=\frac{(m-1)(1-k^2)}{k^2r^2}-\frac{n\phi'}{r\phi},\\
	\Ric(g_1)_{22}&=-\frac{\phi''}{\phi}+\frac{(n-1)(1-\phi'^2)}{\phi^2}-\frac{m\phi'}{r\phi},\\
	\Ric(g_1)_{33}&=\frac{1}{\lambda^2}>0.
\end{align*}
By direct computation, we have $\Ric(g_1)\geqs0$ for any $r>0$ if $0<k<k_0(m,n)$.

Now we build a metric $g_1$ satisfying the initial condition we stated and have the property that
\begin{align*}
g_1(r) = dr^2 
+(kr)^2
g_{\sphere^m} 
+(ar^s)^2 g_{\sphere^n} 
+\lambda^2 g_{\sphere^2},
\end{align*}
for $r\geqs10R_1$, where $a=kR_1^{1-s}$.

\textbf{Step2: Constructing $g_2$.}
Set $U_2=\{r\leqs 10 R_1\}$. We will define a metric $g_2$ by modifying $g_1$ on $M\backslash U_2$ through the ansatz
\begin{align*}
g_2(r) = dr^2 
+\varphi(r)^2
g_{\sphere^m} 
+\left(ar^s \right)^2 g_{\sphere^n} 
+\lambda^2 g_{\sphere^2},
\end{align*}

For $R_2=R_2(\epsilon)$, $R_3=e^{100(ks^2)^{-1}}R_2$, we can choose a smooth function $\varphi(r)$  with the following properties
\begin{equation*}
\varphi(r)=
	\begin{cases}
		kr & \textrm{if } r\leqs R_2,\\
		kr\leqs\varphi\leqs r, |\varphi'|\leqs (1-10^{-1}\epsilon), |r\varphi''|<ks^2 & \textrm{if }R_2\leqs r\leqs R_3,\\
		(1-\epsilon)r & \textrm{if }r\geqs R_3.
	\end{cases}
\end{equation*}
The Ricci curvature of the ansatz is
\begin{align*}
	\Ric(g_2)_{00}&=-\frac{m\varphi''}{\varphi}+n\frac{s(1-s)}{r^2},\\
	\Ric(g_2)_{11}&=-\frac{\varphi''}{\varphi}+(m-1)\frac{(1-\varphi'^2)}{\varphi^2}-\frac{ns\varphi'}{r\varphi},\\
	\Ric(g_2)_{22}&=\frac{s(1-s)}{r^2}+\frac{n-1}{a^2r^{2s}}-\frac{(n-1)s^2}{r^2}-\frac{ms\varphi'}{r\varphi},\\
	\Ric(g_2)_{33}&=\frac{1}{\lambda^2}>0.
\end{align*}
By direct computation, we have $\Ric(g_2)\geqs0$ for any $r>0$.

Now we build a metric $g_2$ satisfying the initial condition we stated and have the property that
\begin{align*}
g_2(r) = dr^2 
+\left[(1-\epsilon)r\right]^2
g_{\sphere^m} 
+(ar^s)^2 g_{\sphere^n} 
+\lambda^2 g_{\sphere^2},
\end{align*}
for $r\geqs10R_3$,

\textbf{Step3: Constructing $g_3$.}
Set $U_3=\{r\leqs 10 R_3\}$. We will define a metric $g_3$ by modifying $g_2$ on $M\backslash U_3$ through the ansatz
\begin{align*}
g_3(r) = dr^2 
+\left[(1-\epsilon)r\right]^2
g_{\sphere^m} 
+\phi(r)^2 g_{\sphere^n} 
+\lambda^2 g_{\sphere^2},
\end{align*}
Then the Ricci curvature of this ansatz is
\begin{align*}
	\Ric(g_3)_{00}&=-\frac{n\phi''}{\phi},\\
	\Ric(g_3)_{11}&=\frac{(m-1)\epsilon(2-\epsilon)}{(1-\epsilon)^2r^2}-\frac{n\phi'}{r\phi},\\
	\Ric(g_3)_{22}&=-\frac{\phi''}{\phi}+\frac{(n-1)(1-\phi'^2)}{\phi^2}-\frac{m\phi'}{r\phi},\\
	\Ric(g_3)_{33}&=\frac{1}{\lambda^2}>0.\end{align*}
We can choose $\phi(r)$ of the form
\begin{equation*}
\phi(r)=
\begin{cases}
ar^s
&\textrm{if }\quad r\leqs10R_3
\\
\phi''\leqs0
&\textrm{if }\quad 10R_3\leqs r\leqs10^3 R_3
\\
\delta
&\textrm{if }\quad 10^3R_3\leqs r,
\end{cases}
\end{equation*}
for some $\delta=\delta(a,R_2,s)$. Then by direct computation we have $\Ric(g_3)\geqs0$ for any $r>0$. Moreover, for $R=10^4R_3$, the last metric $g_3$ satisfies all the properties we stated.

\end{proof}

\section{Connecting $\rean^m$ and $\rean^n$}
We first combine the Model I and Model II into a block which is nearly identical at both ends.

\begin{lemma}\label{lmm-unit}
	For any $m,n\geqs2$, $\epsilon>0$ and $L>1$, then there exists $k=k(m,n)$, $R=R(m,n,\epsilon,L)>1$, $0<\delta<c(m,n,\epsilon)L\iv$, and positive smooth functions $\varphi,\phi,\rho$ on $r\in (0,\infty)$ such that
	\begin{equation*}
		\begin{array}{lll}
		\varphi|_{(0,(LR)\iv)}=kr, & \phi|_{(0,(LR)\iv)}=kr, & \rho|_{(0,(LR)\iv)}=\lambda_1,\\
		\varphi|_{[L\iv,1]}=(1-\epsilon)r, & \phi|_{[L\iv,1]}=\delta, & \rho|_{[L\iv,1]}=\lambda_1,\\
		\varphi|_{[R,\infty)}=kr, & \phi|_{[R,\infty)}=kr,& \rho|_{[R,\infty)}=\lambda_2,
	\end{array}
	\end{equation*}
	 and $\Ric_{\varphi,\phi,\rho}\geqs0$.
\end{lemma}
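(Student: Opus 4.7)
The plan is to paste a rescaled copy of Model II (Lemma \ref{lmm-localmodel2}) on the left to Model I (Lemma \ref{lmm-localmodel1}) on the right, with the two pieces meeting on the middle interval $[L\iv,1]$ where both reduce to the explicit metric $dr^2 + (1-\epsilon)^2 r^2 g_{\sphere^m} + \delta^2 g_{\sphere^n} + \delta^2 g_{\sphere^2}$. Fix once and for all $k = k_0(m,n)$ small enough that both local model lemmas apply.

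First I would apply Lemma \ref{lmm-localmodel2} with parameters $(m, n, k, \epsilon)$ and input constant $\lambda$ to be chosen shortly. A direct inspection of the proof of Lemma \ref{lmm-localmodel2} shows that the output $\delta_{II}$ and the transition scale $R_{II}$ depend only on $(m, n, k, \epsilon)$ and not on $\lambda$, since Steps 1--3 there never modify $\rho \equiv \lambda$ and $\lambda$ contributes only the positive term $1/\lambda^2$ to $\Ric_{33}$. I may therefore consistently set $\lambda := \delta_{II}$, yielding $(\tilde\varphi, \tilde\phi, \tilde\rho) = (k\tilde r, k\tilde r, \delta_{II})$ on $(0, 1)$ and $((1-\epsilon)\tilde r, \delta_{II}, \delta_{II})$ on $[R_{II}, \infty)$. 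Rescaling by $s := (L R_{II})\iv$ via $\varphi_{II}(r) := s\,\tilde\varphi(r/s)$, etc., produces $(kr, kr, \lambda_1)$ on $(0, (LR_{II})\iv)$ and $((1-\epsilon)r, \delta, \lambda_1)$ on $[L\iv, \infty)$, with the identifications $\lambda_1 = \delta = s\delta_{II} = L\iv R_{II}\iv \delta_{II}$. Taking $R_{II}$ sufficiently large (which the proof of Lemma \ref{lmm-localmodel2} permits) ensures $\delta \leqs \delta_0(m, n, \epsilon)$, the threshold needed for Lemma \ref{lmm-localmodel1}.

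Then I apply Lemma \ref{lmm-localmodel1} with parameters $(m, n, \epsilon, k, \delta)$ to obtain $(\varphi_I, \phi_I, \rho_I)$ on $(0, \infty)$ equal to $((1-\epsilon)r, \delta, \delta)$ on $(0,1)$ and to $(kr, kr, \lambda_2)$ on $[R_I, \infty)$, and set $R := R_I$. Define the pasted triple by $(\varphi_{II}, \phi_{II}, \rho_{II})$ for $r \leqs L\iv$ and by $(\varphi_I, \phi_I, \rho_I)$ for $r \geqs L\iv$. Both pieces are identically $((1-\epsilon) r, \delta, \delta)$ on the whole overlap $[L\iv, 1]$ (scaled Model II is literally this for $r \geqs L\iv$ and Model I is literally this for $r \leqs 1$), so the pasted triple is $C^\infty$ at the join $r = L\iv$, satisfies the stated boundary data with $c(m, n, \epsilon) := R_{II}\iv \delta_{II}$, and has $\Ric \geqs 0$ inherited from the two model lemmas.

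The only genuinely delicate point is compatibility of the $\rho$-factor across the two models: Model II leaves $\rho$ equal to a free input constant, whereas Model I imposes $\rho(0) = \phi(0) = \delta$ by its choice of initial metric. Reconciling these forces the identification $\lambda = \delta_{II}$ in the application of Lemma \ref{lmm-localmodel2}, which is legitimate only because $\delta_{II}$ is in fact independent of $\lambda$. Beyond this check, everything reduces to routine parameter bookkeeping and rescaling.
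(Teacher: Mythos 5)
Your proposal is correct and follows essentially the same route as the paper: both paste a rescaled copy of Model II (Lemma \ref{lmm-localmodel2}) onto Model I (Lemma \ref{lmm-localmodel1}) across the overlap $[L\iv,1]$, take $\delta=(LR_{II})\iv\delta_{II}$ (enlarging $R_{II}$ so that $\delta<\delta_0(m,n,\epsilon)$), and use the fact that the output constants of Lemma \ref{lmm-localmodel2} do not depend on $\lambda$ to match the $\rho$-constants on the overlap. The paper phrases the last point by choosing $\lambda:=LR_2\lambda_1$ rather than your $\lambda:=\delta_{II}$, but this is the same reconciliation.
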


\begin{proof}
	For $m,n\geqs2$, take $k=k_0(m,n)$ to be the smaller one in lemma \ref{lmm-localmodel1} and lemma \ref{lmm-localmodel2}. By lemma \ref{lmm-localmodel1}, we have $\delta_0=\delta_0(m,n,\epsilon)$. By lemma \ref{lmm-localmodel2}, we have $\delta_2(m,n,k_0,\epsilon)$ and $R_2(m,n,k_0,\epsilon)$. After possibly increasing $R_2$, we can assume $\delta_2/R_2<\delta_0$. Then we take $\delta=\frac{\delta_2}{LR_2}<\delta_0$. Applying lemma \ref{lmm-localmodel1}, we get $R=R_1(m,n,\epsilon,\delta,k_0)>R_2$ and functions $\varphi_1$, $\phi_1$, $\rho_1$ satisfying
	\begin{equation*}
		\begin{array}{lll}
		\varphi_1|_{(0,1)}=(1-\epsilon)r, & \phi_1|_{(0,1)}=\delta, & \rho_1|_{(0,1)}=\lambda_1,\\
		\varphi_1|_{[R,\infty)}=kr, & \phi_1|_{[R,\infty)}=kr, & \rho_1|_{[R,\infty)}=\lambda_2.
	\end{array}
	\end{equation*}
		
		Applying lemma \ref{lmm-localmodel2} with $\lambda:=LR_2\lambda_1$, we get functions $\varphi_2$, $\phi_2$, $\rho_2$. We rescale the functions by $\tilde{\varphi}(r):=(LR_2)\iv \varphi(LR_2r)$. Similarly we get $\tilde{\phi}$ and $\tilde{\rho}$, then they satisfy the following
		\begin{equation*}
		\begin{array}{lll}
		\tilde{\varphi}_2|_{(0,(LR)\iv)}=kr, & \tilde{\phi}_2|_{(0,(LR)\iv)}=kr, & \tilde{\rho}_2|_{(0,(LR)\iv)}=\lambda_1,\\
		\tilde{\varphi}_2|_{[L\iv,\infty)}=(1-\epsilon)r, & \tilde{\phi}_2|_{[L\iv,\infty)}=(LR_2)\iv \delta_2, & \tilde{\rho}_2|_{[L\iv,\infty)}=\lambda_1.
	\end{array}
	\end{equation*}
	Note that since $(LR_2)\iv\delta_2=\delta$, two groups of functions agree in $r\in[L\iv,1]$ respectively. Then we can glue them to get the new functions $\varphi$, $\phi$, $\rho$. These functions satisfy all the properties we stated.

\end{proof}

Now we can connect $\rean^m$ and $\rean^n$ by gluing two blocks and exchanging $m$ and $n$.

\begin{proposition}\label{prop-connecting}
	For any $m,n\geqs2$, $\epsilon>0$ and $L>1$, then there exists $k=k(m,n)$, $R=R(m,n,\epsilon,L)>1$ and positive smooth functions $\varphi,\phi,\rho$ on $r\in (0,\infty)$ such that
	\begin{equation*}
		\begin{array}{lll}
		\varphi|_{(0,(2L^2R^3)\iv)}=kr, & \phi|_{(0,(2L^2R^3)\iv)}=kr, & \rho|_{(0,(2L^2R^3)\iv)}=\lambda_1,\\
		\varphi|_{[(2L^2R^2)\iv,(2LR^2)\iv]}=\delta_1, & \phi|_{[(2L^2R^2)\iv,(2LR^2)\iv]}=(1-\epsilon)r, & \rho|_{[(2L^2R^2)\iv,(2LR^2)\iv]}=\lambda_1,\\
		\varphi|_{[(2LR)\iv,(LR)\iv]}=kr, & \phi|_{[(2LR)\iv,(LR)\iv]}=kr, & \rho|_{[(2LR)\iv,(LR)\iv]}=\lambda_2,\\
		\varphi|_{[L\iv,1]}=(1-\epsilon)r, & \phi|_{[L\iv,1]}=\delta_2, & \rho|_{[L\iv,1]}=\lambda_2\\
		\varphi|_{[R,\infty)}=kr, & \phi|_{[R,\infty)}=kr,& \rho|_{[R,\infty)}=\lambda_3,
	\end{array}
	\end{equation*}
	 where $0<\delta_1<c(m,n,\epsilon)(L^2R^2)\iv$, $0<\delta_2<c(m,n,\epsilon)L\iv$ and $\Ric_{\varphi,\phi,\rho}\geqs0$.

\end{proposition}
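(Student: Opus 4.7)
I would assemble $(\varphi,\phi,\rho)$ from two applications of Lemma~\ref{lmm-unit}. Applying Lemma~\ref{lmm-unit} to the input $(n,m,\epsilon,L)$ — the swap of $m$ and $n$ interchanges $\varphi$ and $\phi$ in the output, since $\varphi$ is paired with $g_{\sphere^m}$ and $\phi$ with $g_{\sphere^n}$ — yields smooth $\tilde\varphi_A,\tilde\phi_A,\tilde\rho_A$ with middle region $\tilde\varphi_A=\delta^A$, $\tilde\phi_A=(1-\epsilon)r$, $\tilde\rho_A\equiv\lambda_1^A$ on $[L\iv,1]$, a near-zero region $\tilde\varphi_A=\tilde\phi_A=kr$, $\tilde\rho_A\equiv\lambda_1^A$ on $(0,(LR)\iv)$, and a large-$r$ region $\tilde\varphi_A=\tilde\phi_A=kr$, $\tilde\rho_A\equiv\lambda_2^A$ on $[R,\infty)$. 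Lemma~\ref{lmm-unit} applied directly to $(m,n,\epsilon,L)$ gives a second triple $\varphi_B,\phi_B,\rho_B$ with middle region $\varphi_B=(1-\epsilon)r$, $\phi_B=\delta^B$, $\rho_B\equiv\lambda_1^B$ on $[L\iv,1]$, the inner form $\varphi_B=\phi_B=kr$, $\rho_B\equiv\lambda_1^B$ on $(0,(LR)\iv)$, and the outer form $\varphi_B=\phi_B=kr$, $\rho_B\equiv\lambda_2^B$ on $[R,\infty)$.

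\textbf{Rescaling and gluing.} Set $\alpha:=2LR^2$. Rescaling Block~A by $\varphi_A(r):=\alpha\iv\tilde\varphi_A(\alpha r)$ (and analogously for $\phi_A,\rho_A$) is a homothety of the warped-product metric by the factor $\alpha^{-2}$, so $\Ric\geqs0$ is preserved, and the structural intervals of Block~A contract by $\alpha\iv$: the middle cone moves to $[(2L^2R^2)\iv,(2LR^2)\iv]$ with $\varphi_A\equiv\alpha\iv\delta^A=:\delta_1$, $\phi_A=(1-\epsilon)r$, $\rho_A\equiv\alpha\iv\lambda_1^A$; the near-zero region becomes $(0,(2L^2R^3)\iv)$; and the large-$r$ region becomes $[(2LR)\iv,\infty)$ with $\rho_A\equiv\alpha\iv\lambda_2^A$. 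I then define $(\varphi,\phi,\rho)$ to be rescaled Block~A on $(0,(LR)\iv]$ and Block~B on $[(2LR)\iv,\infty)$. On the overlap $[(2LR)\iv,(LR)\iv]$ both read $dr^2+(kr)^2g_{\sphere^m}+(kr)^2g_{\sphere^n}+\mathrm{const}^2g_{\sphere^2}$, so the definitions coincide exactly when $\alpha\iv\lambda_2^A=\lambda_1^B$.

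\textbf{Main obstacle.} The one nontrivial point is matching the two constants $\alpha\iv\lambda_2^A$ and $\lambda_1^B$, which arise as outputs of Lemma~\ref{lmm-unit} rather than free parameters. I would address this by adjusting the auxiliary parameters that appear inside the constructions of Lemmas~\ref{lmm-localmodel1} and \ref{lmm-localmodel2} (the smoothing slope $\delta_1$, the intermediate constants $a,s,R_j$, and the choice of $\delta$ fed into Model~I) so that the terminal values balance. If this direct tuning is awkward, a cleaner alternative is to insert a short transition subinterval inside $[(2LR)\iv,(LR)\iv]$ in which $\rho$ is smoothly interpolated between $\alpha\iv\lambda_2^A$ and $\lambda_1^B$ while $\varphi$ and $\phi$ are perturbed to be slightly concave, so that the possibly negative contribution of $-2\rho''/\rho$ in $\Ric_{00}$ is compensated by the positive contributions of $-m\varphi''/\varphi$ and $-n\phi''/\phi$, exactly in the spirit of the local constructions of Section~\ref{sectionlocalmodel}. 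Once the matching is arranged, $(\varphi,\phi,\rho)$ is smooth, satisfies $\Ric_{\varphi,\phi,\rho}\geqs0$ on all of $(0,\infty)$, and reading off the constant regions of the two blocks recovers the five conditions of the statement with $\lambda_3:=\lambda_2^B$, $\delta_1=\alpha\iv\delta^A<c(m,n,\epsilon)(L^2R^2)\iv$, and $\delta_2=\delta^B<c(m,n,\epsilon)L\iv$.
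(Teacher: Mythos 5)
Your proposal is essentially the paper's own proof: apply Lemma \ref{lmm-unit} once for $(m,n)$ and once with the roles of $m$ and $n$ exchanged, rescale the second block by the homothety factor $(2LR^2)^{-1}$, and glue on the overlap $[(2LR)^{-1},(LR)^{-1}]$, where both metrics have the form $dr^2+(kr)^2g_{\sphere^m}+(kr)^2g_{\sphere^n}+\mathrm{const}^2\,g_{\sphere^2}$. For the one obstacle you single out (matching the two $\rho$-constants), the clean resolution --- and the one the paper itself invokes when it iterates this gluing in the proof of Theorem \ref{thm-weakregular} --- is simply to multiply $\rho$ of one block by a constant factor $\leqs 1$: this preserves $\Ric\geqs 0$ because the terms $\rho''/\rho$, $\varphi'\rho'/(\varphi\rho)$, $\phi'\rho'/(\phi\rho)$ are invariant under $\rho\mapsto c\rho$ while $(1-\rho'^2)/\rho^2$ only increases for $c\leqs1$; your second alternative (interpolating $\rho$ inside the overlap while bending $\varphi,\phi$) is best dropped, since it would destroy the exact form $\varphi=\phi=kr$, $\rho=\lambda_2$ that the statement requires on all of $[(2LR)^{-1},(LR)^{-1}]$, and a smooth $\varphi$ agreeing with $kr$ (value and slope) on both sides of the transition cannot be concave in between unless it is identically $kr$.
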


\begin{proof}
	First apply Lemma \ref{lmm-unit} to get $\varphi_1$, $\phi_1$, $\rho_1$. Next we exchange $m$ and $n$ and then apply Lemma \ref{lmm-unit} again to get $\varphi_2$, $\phi_2$, $\rho_2$. Rescale the second metric $\tilde{\varphi}(r)=(2LR^2)\iv\varphi(2LR^2 r)$, $\tilde{\phi}$ and $\tilde{\rho}$ likewise. Then two metrics agree on $r\in [(2LR)\iv,(LR)\iv]$. We can glue them to get the desired functions.
\end{proof}

\section{Proof of Theorem \ref{thm-weakregular}}

First we give a smoothing lemma to construct smooth metrics at the origin after slightly adjusting the position of the origin.

\begin{lemma}\label{lmm-smoothing}
    For any $m,n\geqs2$, $0<\epsilon<10\iv$ and $L>2$, $\varphi|_{[L\iv,1]}=(1-\epsilon)r$, $\phi|_{[L\iv,1]}=\delta$, $\rho|_{[L\iv,1]}=\lambda$, $\Ric_{\varphi,\phi,\rho}\geqs0$, then we can take smooth modified functions $\hat{\varphi}$, $\hat{\phi}$, $\hat{\rho}$ on $[\epsilon L\iv,+\infty)$ such that
    \begin{equation*}
		\begin{array}{lll}
		\hat{\varphi}|_{[\epsilon L\iv,2\epsilon L\iv)}=r-\epsilon L\iv, & \hat{\phi}|_{[\epsilon L\iv,2\epsilon L\iv)}=\delta, & \hat{\rho}|_{[\epsilon L\iv,2\epsilon L\iv)}=\lambda,\\
		\hat{\varphi}|_{[2L\iv,\infty)}=\varphi, & \hat{\phi}|_{[2L\iv,\infty)}=\phi, & \hat{\rho}|_{[2L\iv,\infty)}=\rho,
	\end{array}
	\end{equation*}
	and $\Ric_{\hat{\varphi},\hat{\phi},\hat{\rho}}\geqs0$.
\end{lemma}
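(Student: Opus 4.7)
The plan is to leave $\phi$ and $\rho$ essentially unchanged and to do all the work on $\varphi$. Since the hypothesis says $\phi\equiv\delta$ and $\rho\equiv\lambda$ on $[L\iv,1]$, I would set $\hat\phi\equiv\delta$ and $\hat\rho\equiv\lambda$ on the full interval $[\epsilon L\iv,2L\iv]$ (and leave them equal to $\phi$, $\rho$ beyond $2L\iv$). The only real task is to construct $\hat\varphi$ bridging between the smooth closing requirement $\hat\varphi(r)=r-\epsilon L\iv$ near the apex $r=\epsilon L\iv$ — which creates a smooth Euclidean vertex where $\sphere^m$ collapses to a point — and $\hat\varphi(r)=(1-\epsilon)r$ by the time we reach $r=2L\iv$.

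For this I would observe that the two affine functions $r\mapsto r-\epsilon L\iv$ and $r\mapsto (1-\epsilon)r$ agree at $r=L\iv$, where both equal $(1-\epsilon)L\iv$. Their minimum
\[
\hat\varphi_0(r)=\min\bigl\{\,r-\epsilon L\iv,\;(1-\epsilon)r\,\bigr\},\qquad r\in[\epsilon L\iv,2L\iv],
\]
is then a continuous concave function, equal to $r-\epsilon L\iv$ on $[\epsilon L\iv,L\iv]$ and to $(1-\epsilon)r$ on $[L\iv,2L\iv]$, with its slope dropping from $1$ to $1-\epsilon$ at the single kink $r=L\iv$. I would then smooth $\hat\varphi_0$ by convolution against a non-negative mollifier of integral one supported in $[-\eta,\eta]$, with $\eta$ chosen small enough that the smoothing region sits well inside $(2\epsilon L\iv,2L\iv)$ and does not perturb either endpoint. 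Since convolving a concave function with a non-negative mass-one mollifier preserves concavity, and averaging $\hat\varphi_0'\in\{1-\epsilon,1\}$ against a probability measure keeps the slope in $[1-\epsilon,1]$, the resulting $\hat\varphi$ is smooth, concave, satisfies $1-\epsilon\leqs\hat\varphi'\leqs 1$, matches $r-\epsilon L\iv$ on $[\epsilon L\iv,2\epsilon L\iv)$ and $(1-\epsilon)r$ on a neighborhood of $2L\iv$, and therefore glues smoothly to $\varphi$ beyond $2L\iv$.

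To check $\Ric_{\hat\varphi,\hat\phi,\hat\rho}\geqs 0$, I would note that on $[2L\iv,\infty)$ nothing has been changed, and on $[\epsilon L\iv,2L\iv]$ the identities $\hat\phi'\equiv 0$ and $\hat\rho'\equiv 0$ collapse Lemma \ref{lmm-riccicurvature} to
\begin{align*}
\Ric_{00}&=-m\frac{\hat\varphi''}{\hat\varphi},\qquad
\Ric_{11}=-\frac{\hat\varphi''}{\hat\varphi}+(m-1)\frac{1-(\hat\varphi')^2}{\hat\varphi^2},\\
\Ric_{22}&=\frac{n-1}{\delta^2},\qquad
\Ric_{33}=\frac{1}{\lambda^2},
\end{align*}
all of which are non-negative: the first two because $\hat\varphi''\leqs 0$ and $\hat\varphi'\leqs 1$, and the last two trivially. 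The smooth closing condition (\ref{conditionvarphi}) at $r=\epsilon L\iv$ holds automatically since $\hat\varphi$ is linear with slope one there. I do not expect any real obstacle; the only point requiring care is the mollification step, where I must simultaneously keep the smoothing window away from both endpoints and ensure concavity is preserved — both are standard properties of convolution against a non-negative bump.
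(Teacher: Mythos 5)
Your proposal is correct and follows essentially the same route as the paper: keep $\hat\phi\equiv\delta$ and $\hat\rho\equiv\lambda$ on the modified region, replace $\varphi$ by a concave function with slope in $[1-\epsilon,1]$ interpolating between $r-\epsilon L\iv$ near the new apex and $(1-\epsilon)r$ near $2L\iv$, and read off nonnegativity of the Ricci terms from Lemma \ref{lmm-riccicurvature}. Your explicit realization via mollifying $\min\{r-\epsilon L\iv,(1-\epsilon)r\}$ (using a symmetric mollifier so the affine pieces are preserved exactly) is just a concrete instance of the smoothing the paper leaves implicit.
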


\begin{proof}
	We construct $\hat{g}$ by modifying $g$ on $r\in (\epsilon L\iv,2L\iv)$ through the ansatz
	\begin{align*}
		\hat{g}=dr^2+\varphi(r)^2g_{\sphere^m}+\delta^2 g_{\sphere^n}+\lambda^2 g_{\sphere^2}.
	\end{align*}
	The Ricci curvature of this ansatz is
	\begin{align*}
	\Ric(\hat{g})_{00}&=-\frac{m\varphi''}{\varphi},\\
	\Ric(\hat{g})_{11}&=-\frac{\varphi''}{\varphi}+\frac{(m-1)(1-\varphi'^2)}{\varphi^2},\\
	\Ric(\hat{g})_{22}&=\frac{n-1}{\delta^2}>0,\\
	\Ric(\hat{g})_{33}&=\frac{1}{\lambda^2}>0.\end{align*}
	So we can choose smooth $\varphi(r)$ such that $\Ric(\hat{g})\geqs0$ of the form
	\begin{equation*}
	\varphi(r)=
	\begin{cases}
	r-\epsilon L\iv
	&\textrm{if }\quad \epsilon L\iv\leqs r\leqs 2\epsilon L\iv,
	\\
	\varphi''\leqs0
	&\textrm{if }\quad 2\epsilon L\iv \leqs r \leqs 2L\iv ,
	\\
	(1-\epsilon)r
	&\textrm{if }\quad 2L\iv \leqs r\leqs 1.
	\end{cases}
	\end{equation*}

\end{proof}

Now we are ready to prove Theorem \ref{thm-weakregular}.

\begin{pfthm}
	Let $m\geqs n\geqs 2$ be integers. For any $i\geqs1$, we will construct smooth metrics $g_i=(\varphi_i,\phi_i,\rho_i)$ with $\Ric(g_i)\geqs0$ on $M=(0,\infty)\times \sphere^m\times\sphere^n\times \sphere^2$. Moreover, we will find a sequence of numbers $N_i\geqs10 N_{i-1}$ such that $(\varphi_i,\phi_{i})$ and $(\varphi_{i+1},\phi_{i+1})$ coincide on $r\in [N_i\iv,\infty)$ and $\rho_i\leqs N_i^{-10}$.
	
	Set $\epsilon_i=100^{-i}$. Apply Proposition \ref{prop-connecting} with $\epsilon=\epsilon_1$ and $L_1=10$, then we get $g_1=(\varphi_1,\phi_1,\rho_1)$ and $R_1$. Set $N_1=2L_1^2R_1^3$. We also have $\rho_1\leqs N_1^{-4}$ after possibly scaling $N_1^{-4}\rho_1(r)$.  Note that the Ricci curvature will increase if we change $\rho(r)$ into $N\iv\rho(r)$. 
	
	We construct $g_{i+1}$ by induction. Assume we have already constructed $g_i$ and $N_i$, and $\varphi_i(r)=\phi_i(r)=kr$, $\rho(r)=\lambda_i$ on $r\in(0,N_i\iv)$. Again apply Proposition \ref{prop-connecting} with $\epsilon_{i+1}$ and $L_{i+1}=10^{i+1}$, then we get $(\tilde{\varphi}_{i+1},\tilde{\phi}_{i+1},\tilde{\rho}_{i+1})$ and $R_{i+1}$. After scaling $\varphi(r)=(2N_iR_{i+1})\iv\tilde{\varphi}_{i+1}(2N_iR_{i+1}r)$, $(\tilde{\varphi}_{i+1},\tilde{\phi}_{i+1})$ agree with $(\varphi_i,\phi_i)$ on $r\in [(2N_i)\iv,N_i\iv]$. Although $\tilde{\rho}_{i+1}$ may not agree with $\rho_i$, we can make them equal by scaling both. So we can glue them to get $g_{i+1}$. Set $N_{i+1}=4N_iL_{i+1}^2R_{i+1}^4$, then $\rho_{i+1}\leqs N_{i+1}^{-4}$ after possible scaling.
	
	Next we modify $g_i$ on $r\in (\epsilon_iR_iN_i\iv,2R_iN_i\iv)$ by Lemma \ref{lmm-smoothing}, then we get $\hat{g}_i$, which is also smooth at the adjusted origin $r=\epsilon_iR_iN_i\iv$. We denote this origin by $O_i$. Set $\hat{M}=\rean^{n+1}\times\sphere^m\times\sphere^2$. Now $(\hat{M}^{m+n+3},\hat{g}_i,O_i)$ are a sequence of pointed complete smooth metric with $\Ric(\hat{g}_i)\geqs0$, then by Gromov's precompactness theorem, up to subsequence, there exists a metric space $(X,d)$ such that
	\begin{align*}
		(\hat{M},\hat{g}_i,O_i)\stackrel{GH}{\longrightarrow} (X,d ,p ).
	\end{align*}
	
	On one hand, first note that for $A_i:=N_i R_i\iv L_i^{-1/2}\to\infty$, the rescaled metrics $(\hat{M},A_i^2\hat{g}_j,O_j)$ for $j\geqs i$ become
	\begin{align*}
		\varphi_j|_{(L_i^{-1/2},\frac12L_i^{1/2})}\leqs c L_i^{-1/2},\quad
		\phi_j|_{(L_i^{-1/2},\frac12L_i^{1/2})}=(1-\epsilon_i)r,\quad
		\rho_j|_{(L_i^{-1/2},\frac12L_i^{1/2})}\leqs N_i^{-5}.
	\end{align*}
	
	Let $j\to\infty$ and denote $A_{a,b}(X,d,p):=\{x\in X:a<d(x,p)<b\}$, then
	\begin{align*}
		d_{GH}\left(A_{L_i^{-1/2},\frac12L_i^{1/2}}(X,A_id,p),A_{L_i^{-1/2},\frac12L_i^{1/2}}(\rean^{n+1},g_0,0^{n+1})\right)\leqs \Psi(i\iv).
	\end{align*}
	Then let $i\to\infty$, we have
	\begin{align*}
		(X,A_id,p)\stackrel{GH}{\longrightarrow} (\rean^{n+1},g_0,0^{n+1}).
	\end{align*}
	
	On the other hand, note that for $B_i:=\frac12 N_i R_i^{-3} L_i^{-3/2}\to\infty$, the rescaled metrics $(\hat{M},B_i^2\hat{g}_j,O_j)$ for $j\geqs i$ become
	\begin{align*}
		\varphi_j|_{(L_i^{-1/2},\frac12L_i^{1/2})}=(1-\epsilon_i)r,\quad
		\phi_j|_{(L_i^{-1/2},\frac12L_i^{1/2})}\leqs c L_i^{-1/2},\quad
		\rho_j|_{(L_i^{-1/2},\frac12L_i^{1/2})}\leqs N_i^{-5}.
	\end{align*}
	Then
	\begin{align*}
		d_{GH}\left(A_{L_i^{-1/2},\frac12L_i^{1/2}}(X,B_id,p),A_{L_i^{-1/2},\frac12L_i^{1/2}}(\rean^{m+1},g_0,0^{m+1})\right)\leqs \Psi(i\iv).
	\end{align*}
	Then let $i\to\infty$, we have
	\begin{align*}
		(X,B_id,p)\stackrel{GH}{\longrightarrow} (\rean^{m+1},g_0,0^{m+1}).
	\end{align*}

\end{pfthm}

\section{Proof of Theorem \ref{thm-riemcone}}
For the convenience of the readers, we first recall the definition of Riemannian cones.
\begin{definition}
    Given a closed Riemannian manifold $(M^n,g)$, then the \emph{Riemannian cone} $C(M^n,g)$ over $(M^n,g)$ is a metric completion of the following warped product
    \begin{align*}
        \tilde{M}:=\rean_+\times M,\quad \tilde{g}:=dr^2+r^2g.
    \end{align*}
\end{definition}

Next we compute the Ricci curvature of the $N$-fold warped product.
\begin{lemma}\label{lmm-nwarped}
    Given a finite collection $(M_i^{n_i},g_i)$ $(i=1,\cdots,N)$ of closed Riemannian manifolds. Let $\hat{M}:=\rean_+\times \left(\prod_{i=1}^N M_i\right)$ be the $N$-fold warped product, i.e. 
    \begin{align*}
        \hat{g}=dr^2+\sum_{i=1}^N f_i^2(r)g_i.
    \end{align*}
    Then the Ricci curvatures of $\hat{g}$ are
    \begin{align*}
        \Ric(X_0)&=\left(-\sum_{i=1}^N n_i\frac{f_i''}{f_i}\right)X_0,\\
        \Ric(X^{(i)}_j)&=\left[-\frac{f_i''}{f_i}+\frac{\Ric^{(i)}_j-(n_i-1)(f_i')^2}{f_i^2}-\sum_{l\neq i}n_l\frac{f_i'f_l'}{f_if_l}\right]X^{(i)}_j,
    \end{align*}
    where $X_0=\frac{\dd}{\dd r}$, $X^{(i)}_j\in T_{M_i} (j=1,\cdots,n_i)$ and $\Ric_{g_i}(X^{(i)}_j)=\Ric^{(i)}_jX^{(i)}_j$.
\end{lemma}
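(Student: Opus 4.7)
The plan is to extend the triple warped product computation recorded in Lemma \ref{lmm-riccicurvature} to the $N$-fold setting by working in an orthonormal frame adapted to the product structure. For each $i$, I would fix a local $g_i$-orthonormal frame $\{E_j^{(i)}\}_{j=1}^{n_i}$ on $M_i$ and lift it to $(\hat{M},\hat{g})$ by setting $\hat{E}_j^{(i)}:=f_i(r)^{-1}X_j^{(i)}$; together with $E_0=\partial_r$ this yields a local orthonormal frame for $\hat{g}$. A direct application of the Koszul formula then gives the standard warped-product connection identities $\hat{\nabla}_{E_0}E_0=0$, $\hat{\nabla}_{E_0}\hat{E}_j^{(i)}=0$, and $\hat{\nabla}_{\hat{E}_j^{(i)}}E_0=\tfrac{f_i'}{f_i}\hat{E}_j^{(i)}$; the intra-factor derivative $\hat{\nabla}_{\hat{E}_j^{(i)}}\hat{E}_k^{(i)}$ equals the lift of the $g_i$-Levi-Civita connection plus a radial component $-\tfrac{f_i'}{f_i}\delta_{jk}E_0$; and, crucially for $N\geqs 3$, the cross-factor derivative $\hat{\nabla}_{\hat{E}_j^{(i)}}\hat{E}_k^{(l)}$ vanishes whenever $i\neq l$, since $X_j^{(i)}$ and $X_k^{(l)}$ commute and live in orthogonal factors.

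With these connection formulas in hand, I would then compute the three possible types of sectional curvature from $R(X,Y)Z=\hat{\nabla}_X\hat{\nabla}_YZ-\hat{\nabla}_Y\hat{\nabla}_XZ-\hat{\nabla}_{[X,Y]}Z$. The radial-tangential plane gives $K(E_0,\hat{E}_j^{(i)})=-f_i''/f_i$, exactly as in the one-factor warped product. The intra-factor plane for $k\neq j$ gives $K(\hat{E}_j^{(i)},\hat{E}_k^{(i)})=f_i^{-2}\bigl(K^{g_i}(E_j^{(i)},E_k^{(i)})-(f_i')^2\bigr)$, which is the usual single-factor warped-product sectional curvature. The genuinely new computation is the cross-factor plane: a short calculation using $[\hat{E}_j^{(i)},\hat{E}_k^{(l)}]=0$ and the vanishing cross-derivatives shows $K(\hat{E}_j^{(i)},\hat{E}_k^{(l)})=-\tfrac{f_i'f_l'}{f_if_l}$ for $i\neq l$.

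Finally I would assemble the Ricci curvature by tracing over the orthonormal basis. Summing $K(E_0,\hat{E}_j^{(i)})$ over all $(i,j)$ produces $\Ric(E_0,E_0)=-\sum_i n_i f_i''/f_i$, matching the stated formula. For $\Ric$ in a tangential direction $\hat{E}_j^{(i)}$, the trace decomposes into a single radial contribution $-f_i''/f_i$, a sum of $(n_i-1)$ intra-factor contributions combining to $f_i^{-2}\bigl(\Ric_j^{(i)}-(n_i-1)(f_i')^2\bigr)$, and a sum over all $(l,k)$ with $l\neq i$ of cross-factor contributions $-f_i'f_l'/(f_if_l)$, which after grouping by $l$ becomes $-\sum_{l\neq i}n_l f_i'f_l'/(f_if_l)$. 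Adding these three pieces reproduces the claim, and rescaling from $\hat{E}_j^{(i)}$ back to $X_j^{(i)}$ does not affect the eigenvalue.

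There is no serious conceptual obstacle; the only subtle point is the cross-factor term, which relies on the fact that vector fields tangent to different $M_l$ factors commute in $\hat{M}$ and have vanishing mutual covariant derivatives under $\hat{\nabla}$. Once this is verified, the derivation is parallel to the $N=3$ case of Lemma \ref{lmm-riccicurvature}, differing only in the number of summation indices, and it specializes correctly: setting $N=3$ with $(M_1,M_2,M_3)=(\sphere^m,\sphere^n,\sphere^2)$ and $(f_1,f_2,f_3)=(\varphi,\phi,\rho)$ recovers the four displayed formulas in that lemma.
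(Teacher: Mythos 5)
Your argument is correct and is precisely the ``direct computation'' that the paper leaves implicit: the standard multiply-warped-product connection identities, the three types of sectional curvatures (radial, intra-factor, and the cross-factor value $-f_i'f_l'/(f_if_l)$), and a trace over the adapted orthonormal frame, which reproduces the stated formulas and specializes to Lemma \ref{lmm-riccicurvature} when $N=3$. The only point worth adding for completeness is that the off-diagonal components $\Ric(\hat{E}_j^{(i)},\hat{E}_k^{(l)})$ also vanish in this frame (for $i\neq l$ automatically, and for $i=l$ because the $E_j^{(i)}$ are chosen as Ricci eigenvectors of $g_i$), so that $\Ric$ really acts diagonally as an endomorphism as claimed.
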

\begin{proof}
    By direct computation.
\end{proof}
\begin{lemma}
    Under the assumption of Lemma \ref{lmm-nwarped}. Assume $\Ric_{g_i}\geqs (n_i-1)g_i$ for $1\leqs i\leqs N$ and $(M_N,g_N)$ is isometric to the standard metric $(\sphere^2,g_{\sphere^2})$. If 
    \begin{align*}
        f_1=\varphi,\quad f_i=\phi \text{ for }2\leqs i\leqs N-1,\quad f_N=\rho,
    \end{align*}
    then
    \begin{align*}
        \Ric(X_0)&=-\left(n_1\frac{\varphi''}{\varphi}+n\frac{\phi''}{\phi}+2\frac{\rho''}{\rho}\right)X_0,\\
        \Ric(X^{(1)})&\geqs\left[-\frac{\varphi''}{\varphi}+(n_1-1)\frac{1-(\varphi')^2}{\varphi^2}-n\frac{\varphi'\phi'}{\varphi \phi}-2\frac{\varphi'\rho'}{\varphi\rho}\right]X^{(1)},\\
        \Ric(X^{(i)})&\geqs\left[-\frac{\phi''}{\phi}+\frac{n_i-1}{\phi^2}-(n-1)\frac{(\phi')^2}{\phi^2}-n_1\frac{\varphi'\phi'}{\varphi \phi}-2\frac{\phi'\rho'}{\phi\rho}\right]X^{(i)}\ (2\leqs i\leqs N-1),\\
        \Ric(X^{(N)})&=\left[-\frac{\rho''}{\rho}+\frac{1-(\rho')^2}{\rho^2}-n_1\frac{\varphi'\rho'}{\varphi\rho}-n\frac{\phi'\rho'}{\phi\rho}\right]X^{(N)},
    \end{align*}
    where $n=\sum_{i=2}^{N-1}n_i$.
\end{lemma}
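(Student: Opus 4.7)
The plan is to specialize Lemma~\ref{lmm-nwarped} to the warping pattern $f_1=\varphi$, $f_i=\phi$ for $2\leqs i\leqs N-1$, $f_N=\rho$, and then promote the resulting identities to the stated inequalities by applying $\Ric_{g_i}\geqs(n_i-1)g_i$ in each tangent direction. Let $\Ric^{(i)}_j$ denote the eigenvalue of $\Ric_{g_i}$ in the direction of $X^{(i)}_j$; the hypothesis gives $\Ric^{(i)}_j\geqs n_i-1$ for $1\leqs i\leqs N-1$, while $\Ric^{(N)}_j=1$ exactly, since $(M_N,g_N)=(\sphere^2,g_{\sphere^2})$.

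Three of the four formulas are essentially immediate. The radial identity follows by direct substitution, using $\sum_{i=2}^{N-1}n_i=n$ and $n_N=2$, and it holds with equality. For $X^{(1)}$, the sum $\sum_{l\neq 1}n_l f_l'/f_l$ collapses to $n\phi'/\phi+2\rho'/\rho$, so Lemma~\ref{lmm-nwarped} produces
\[
\Ric(X^{(1)}_j)=\left[-\frac{\varphi''}{\varphi}+\frac{\Ric^{(1)}_j-(n_1-1)(\varphi')^2}{\varphi^2}-n\frac{\varphi'\phi'}{\varphi\phi}-2\frac{\varphi'\rho'}{\varphi\rho}\right]X^{(1)}_j,
\]
and $\Ric^{(1)}_j\geqs n_1-1$ upgrades this to the asserted lower bound. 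For $X^{(N)}$, substitution of $n_N=2$ and $\Ric^{(N)}_j=1$ gives the last identity with equality.

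The middle directions $2\leqs i\leqs N-1$ are where the only bookkeeping sits. Here $\sum_{l\neq i}n_l f_l'/f_l$ splits into three pieces $n_1\varphi'/\varphi+(n-n_i)\phi'/\phi+2\rho'/\rho$, and the $-(n-n_i)(\phi')^2/\phi^2$ term this produces must be added to the $-(n_i-1)(\phi')^2/\phi^2$ contribution coming from the $(f_i')^2/f_i^2$ term of Lemma~\ref{lmm-nwarped}, yielding the uniform coefficient $-(n-1)$. Replacing the surviving $\Ric^{(i)}_j/\phi^2$ by its lower bound $(n_i-1)/\phi^2$ then closes this direction.

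There is no genuine analytic content in the argument: it is pure substitution plus the scalar inequalities $\Ric^{(i)}_j\geqs n_i-1$. The one thing to guard against is the bookkeeping in the middle case, where it is easy to omit the self-index when partitioning the cross-product sum, or to forget to merge the two sources of $(\phi')^2/\phi^2$ before reading off the coefficient $-(n-1)$.
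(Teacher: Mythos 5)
Your proposal is correct and matches the paper's (unwritten) argument: the paper's proof is just "It follows from Lemma \ref{lmm-nwarped}," and your substitution — including the merging of the $-(n_i-1)(\phi')^2/\phi^2$ term with the $-(n-n_i)(\phi')^2/\phi^2$ cross-term to get the coefficient $-(n-1)$, and the use of $\Ric^{(N)}_j=1$ for the round $\sphere^2$ — is exactly the intended verification.
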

\begin{proof}
    It follows from Lemma \ref{lmm-nwarped}.
\end{proof}
We denote $\Ric(\hat{g})$ in the lemma above as $\widehat{\Ric}_{\varphi,\phi,\rho}$.
Noticing the similarity to Lemma \ref{lmm-riccicurvature}, this is why we are able to prove a stronger result. We now state several lemmas similar to those in previous sections without proofs, since they are almost same as those presented earlier.
\begin{lemma}[Model I]\label{lmm2-localmodel1}
Let $n_i \geqs 2 $ $(1\leqs i\leqs N)$. For any $0< \epsilon \leqs \frac{1}{100} $, $0<\delta \leqs \delta_0 (n_i,\epsilon) $ and $0<k<k_0(n_i)$, there exist constants $R(n_i,\epsilon,\delta,k)>0$ and positive functions $ \varphi ,\phi ,\rho $ on $(0,\infty )$, such that
\begin{equation*}
    \begin{array}{lll}
       \varphi |_{(0,1 )} = (1-\epsilon)r, & \phi |_{(0,1)} = \delta, &  \rho'|_{(0,1)}=0,\\
       \varphi|_{[R,+\infty )} = kr,  & \phi|_{[R,+\infty )} =kr, &\rho'|_{[R,+\infty)} =0,
    \end{array}
\end{equation*}
and $\widehat{\Ric}_{\varphi,\phi,\rho} \geqs 0$.
\end{lemma}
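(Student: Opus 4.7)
The plan is to mirror the four-step construction of Lemma \ref{lmm-localmodel1} verbatim, using the same ansatz functions $\varphi,\phi,\rho$ obtained by smoothing the same piecewise-linear and power-law profiles. The only new content is to verify that the Ricci-positivity estimates in each of the four steps carry over when $\Ric_{g_{\varphi,\phi,\rho}}$ is replaced by $\widehat{\Ric}_{\varphi,\phi,\rho}$.

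The key observation is the structural similarity between the formulas for $\widehat{\Ric}_{\varphi,\phi,\rho}$ stated just before the lemma and those of Lemma \ref{lmm-riccicurvature}. After the substitutions $m\mapsto n_1$ and $n\mapsto n:=\sum_{i=2}^{N-1} n_i$, the $X_0$, $X^{(1)}$ and $X^{(N)}$ directions have formulas identical to $X_0$, $X_1$ and $X_3$ of Lemma \ref{lmm-riccicurvature}. The only discrepancy lies in the middle directions $X^{(i)}$ with $2\leqs i\leqs N-1$, where the quantity $(n-1)(1-(\phi')^2)/\phi^2$ from the old formula is replaced by
\[
\frac{n_i-1}{\phi^2}-\frac{(n-1)(\phi')^2}{\phi^2}.
\]
Since each $n_i\geqs 2$, the first summand is at least $1/\phi^2$, so only the second needs to be absorbed.

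In each of the four construction steps the derivative $\phi'$ is small: either $|\phi'|\leqs\delta_1$ (Steps 1 and 2), $|\phi'|\leqs 2k$ (Step 3), or $\phi'\equiv k$ (Step 4). Shrinking $k_0(n_i)$ and $\delta_0(n_i,\epsilon)$ if necessary so that $(n-1)(\phi')^2 \leqs \tfrac{1}{2}(n_i-1)$ holds throughout the construction, the middle-factor term is bounded below by $(n_i-1)/(2\phi^2)$, which plays exactly the role of $(n-1)/(2\phi^2)$ in the original proof. The cross terms appearing in $\widehat{\Ric}(X^{(i)})$ are formally identical to those in $\Ric(X_2)$ after the above substitutions, so every quantitative estimate of Lemma \ref{lmm-localmodel1} transfers line by line, with the final constants $k_0$, $\delta_0$ and $R$ now depending on the full tuple $(n_i)_{i=1}^{N}$ rather than on $m$ and $n$ alone.

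I do not anticipate any genuinely new obstacle. The only point requiring some care is Step 2, where $\phi(r)=\delta_1 r + b_2$ so that $\phi'\equiv\delta_1$ does not vanish; but the constraint $(n-1)\delta_1^2\leqs\tfrac{1}{2}(n_i-1)$ is automatically enforced by choosing $\delta_1=\delta_1(n_i,\delta)$ small enough as in the original argument. Steps 1, 3 and 4 are even easier because the $(\phi')^2$ contribution is negligible there. After these quantitative adjustments, gluing the four constructed pieces yields the desired functions $\varphi,\phi,\rho$ on $(0,\infty)$ satisfying the prescribed boundary behaviour and $\widehat{\Ric}_{\varphi,\phi,\rho}\geqs 0$.
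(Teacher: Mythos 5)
Your proposal is correct and is essentially the paper's own (omitted) argument: the paper states this lemma without proof precisely because the construction of Lemma \ref{lmm-localmodel1} carries over verbatim once the formulas of $\widehat{\Ric}_{\varphi,\phi,\rho}$ are matched with those of Lemma \ref{lmm-riccicurvature} under $m\mapsto n_1$, $n\mapsto\sum_{i=2}^{N-1}n_i$. Your treatment of the only genuine discrepancy, replacing $(n-1)(1-(\phi')^2)/\phi^2$ by $(n_i-1)/\phi^2-(n-1)(\phi')^2/\phi^2$ and absorbing the latter via the smallness of $|\phi'|\leqs\max\{2k,\delta_1\}$ after shrinking $k_0$ and $\delta_0$, is exactly the right quantitative adjustment.
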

\begin{lemma}[Model II]\label{lmm2-localmodel2}
Let $n_i \geqs 2 $ $(1\leqs i\leqs N)$. Then for any $0< \epsilon \leqs \frac{1}{100} $, $\lambda>0$, $0<k<k_0(n_i)$, there are constants $R(n_i,k,\epsilon )>0$, $\delta (n_i,k,\epsilon) >0$ and positive functions $ \varphi ,\phi ,\rho $ on $(0,\infty )$, such that
\begin{equation*}
    \begin{array}{lll}
       \varphi |_{(0,1 )} = kr, & \phi |_{(0,1)} = kr, &  \rho|_{(0,1)}=\lambda, \\
       \varphi|_{[R,+\infty )} = (1-\epsilon ) r,  & \phi|_{[R,+\infty )} = \delta, &\rho|_{[R,+\infty)} =\lambda,
    \end{array}
\end{equation*}
and $\widehat{\Ric}_{\varphi,\phi,\rho} \geqs 0$.
\end{lemma}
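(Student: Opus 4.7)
The plan is to mimic, step for step, the three-stage construction used in the proof of Lemma \ref{lmm-localmodel2}. The point is that the formula $\widehat{\Ric}_{\varphi,\phi,\rho}$ from the preceding lemma is structurally identical to $\Ric_{g_{\varphi,\phi,\rho}}$ in Lemma \ref{lmm-riccicurvature}: the only substantive change is that the term $(n-1)(1-(\phi')^2)/\phi^2$ is replaced by a lower bound of the form $(n_i-1)/\phi^2-(n-1)(\phi')^2/\phi^2$ in each of the middle blocks (with $n=\sum_{i=2}^{N-1}n_i$). Because the hypothesis $\Ric_{g_i}\geqs(n_i-1)g_i$ supplies exactly the positive $(n_i-1)/\phi^2$ term needed to dominate the bad cross-couplings $\varphi'\phi'/(\varphi\phi)$ and $\phi'\rho'/(\phi\rho)$, every inequality in the proof of Lemma \ref{lmm-localmodel2} goes through verbatim once $k$, $s$, $\delta_1/\delta$, and the switching radii are taken small enough in terms of the $n_i$.

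Concretely, I would begin with the cone metric $g_0=dr^2+(kr)^2g_{\sphere^{n_1}}+(kr)^2\sum_{i=2}^{N-1}g_i+\lambda^2 g_{\sphere^2}$ (recalling $M_N=\sphere^2$), fix $s=\epsilon/(10^6 n_1 n)$, and in Step 1 produce $g_1$ by concavely bending $\phi$ from $kr$ to $a r^s=kR_1^{1-s}r^s$ on an annulus around $r=R_1=100$, keeping $\varphi=kr$ and $\rho=\lambda$. The Ricci formula from the preceding lemma gives $\Ric(X_0)=-n\phi''/\phi\geqs0$ from concavity, $\Ric(X^{(1)})\geqs(n_1-1)(1-k^2)/(kr)^2-n\phi'/(r\phi)\geqs0$ for $k<k_0(n_i)$, and $\Ric(X^{(N)})=1/\lambda^2>0$; the critical middle directions satisfy $\Ric(X^{(i)})\geqs-\phi''/\phi+(n_i-1)/\phi^2-(n-1)(\phi')^2/\phi^2-n_1\varphi'\phi'/(\varphi\phi)-2\phi'\rho'/(\phi\rho)$, and here the term $(n_i-1)/\phi^2$ beats the rest provided $k,s$ are small enough, which is precisely the point where the curvature hypothesis $\Ric_{g_i}\geqs(n_i-1)g_i$ is used.

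Step 2 then bends $\varphi$ from $kr$ to $(1-\epsilon)r$ on an annulus $[R_2,R_3]$ using the same logarithmic test function as in Lemma \ref{lmm-localmodel2}, with the estimates $|\varphi'|\leqs 1-\epsilon/10$ and $|r\varphi''|<ks^2$. All the positive contributions $(n_1-1)(1-\varphi'^2)/\varphi^2$, $(n_i-1)/\phi^2$, $1/\lambda^2$ remain unchanged, while the cross-terms are again negligible. Step 3 drives $\phi$ concavely down from $ar^s$ to a constant $\delta=\delta(a,R_2,s)$ on $[10R_3,10^3R_3]$ while keeping $\varphi=(1-\epsilon)r$ and $\rho=\lambda$; nonnegativity of Ricci in this step again reduces to $(n_i-1)/\phi^2$ dominating $-\phi''/\phi-(n-1)(\phi')^2/\phi^2-n_1\varphi'\phi'/(\varphi\phi)$, which holds for $k<k_0(n_i)$.

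The routine but tedious part is the bookkeeping: every constraint $k<k_0$, $\delta_1\leqs\delta_1(m,n,\delta)$, etc.\ that appears in Section 3 must now depend on $n_1$ and $n=\sum_{i=2}^{N-1}n_i$. Since everything is polynomial in these dimensions and absorbed into the smallness of $k$, $s$ and the largeness of $R_j$, no genuine obstruction arises; this is the only step that is not verbatim. With Lemmas \ref{lmm2-localmodel1} and \ref{lmm2-localmodel2} in hand, the analogs of the gluing Lemma \ref{lmm-unit} and Proposition \ref{prop-connecting} hold by the same exchange-and-rescale argument, and the main theorem \ref{thm-riemcone} follows by iterating through a list of scales $N_i$ so that each cone $C(M_i^{n_i},g_i)$ appears as a tangent cone at a preassigned rate.
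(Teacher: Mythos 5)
Your proposal is correct and is essentially the paper's intended argument: the paper deliberately omits the proof of this lemma because it is a verbatim rerun of the three-step construction in Lemma \ref{lmm-localmodel2}, with the Ricci formulas of Lemma \ref{lmm-riccicurvature} replaced by the lower bounds from Lemma \ref{lmm-nwarped}, and you correctly identify that the hypothesis $\Ric_{g_i}\geqs(n_i-1)g_i$ supplies the positive term $(n_i-1)/\phi^2$ that replaces $(n-1)(1-(\phi')^2)/\phi^2$ and that all constants now depend on $n_1$ and $n=\sum_{i=2}^{N-1}n_i$. (The only cosmetic slip is writing $g_{\sphere^{n_1}}$ for the first factor, which should be the general metric $g_1$ on $M_1$; this is harmless since only its Ricci lower bound is used.)
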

\begin{lemma}[Block]\label{lmm2-unit}
	Let $n_i \geqs 2 $ $(1\leqs i\leqs N)$. For any $\epsilon>0$ and $L>1$, then there exists $k=k(n_i)$, $R=R(n_i,\epsilon,L)>1$, $0<\delta<c(n_i,\epsilon)L\iv$, and positive smooth functions $\varphi,\phi,\rho$ on $r\in (0,\infty)$ such that
	\begin{equation*}
		\begin{array}{lll}
		\varphi|_{(0,(LR)\iv)}=kr, & \phi|_{(0,(LR)\iv)}=kr, & \rho|_{(0,(LR)\iv)}=\lambda_1,\\
		\varphi|_{[L\iv,1]}=(1-\epsilon)r, & \phi|_{[L\iv,1]}=\delta, & \rho|_{[L\iv,1]}=\lambda_1,\\
		\varphi|_{[R,\infty)}=kr, & \phi|_{[R,\infty)}=kr,& \rho|_{[R,\infty)}=\lambda_2,
	\end{array}
	\end{equation*}
	 and $\widehat{\Ric}_{\varphi,\phi,\rho}\geqs0$.
\end{lemma}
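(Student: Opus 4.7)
The plan is to parallel the proof of Lemma \ref{lmm-unit} step by step, replacing Lemma \ref{lmm-localmodel1} and Lemma \ref{lmm-localmodel2} by their generalisations Lemma \ref{lmm2-localmodel1} and Lemma \ref{lmm2-localmodel2}. Since the prescribed boundary values of $\varphi,\phi,\rho$ in the generalised model lemmas have exactly the same shape as in the triple warped product case, no combinatorial change in the gluing procedure is required; only a scale-matching calculation and the verification of the parameter ranges are needed.

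First I would take $k=k_0(n_i)$ as the smaller of the two constants produced by Lemma \ref{lmm2-localmodel1} and Lemma \ref{lmm2-localmodel2}, and extract $\delta_0=\delta_0(n_i,\epsilon)$ from the former, together with $\delta_2=\delta_2(n_i,k_0,\epsilon)$ and $R_2=R_2(n_i,k_0,\epsilon)$ from the latter. After enlarging $R_2$ if needed, one may assume $\delta_2/R_2<\delta_0$. Setting $\delta:=\delta_2/(LR_2)<\delta_0$ and applying Lemma \ref{lmm2-localmodel1} with this $\delta$ yields $R=R_1(n_i,\epsilon,\delta,k_0)>R_2$ and functions $(\varphi_1,\phi_1,\rho_1)$ with $\widehat{\Ric}\geqs 0$, equal to $((1-\epsilon)r,\delta,\lambda_1)$ on $(0,1)$ and to $(kr,kr,\lambda_2)$ on $[R,\infty)$.

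Next I would apply Lemma \ref{lmm2-localmodel2} with parameter $\lambda:=LR_2\lambda_1$ to obtain $(\varphi_2,\phi_2,\rho_2)$, and then rescale by $\tilde\varphi(r):=(LR_2)\iv \varphi_2(LR_2 r)$, and similarly $\tilde\phi,\tilde\rho$. A direct check gives $\tilde\varphi|_{(0,(LR)\iv)}=kr$, $\tilde\phi|_{(0,(LR)\iv)}=kr$, $\tilde\rho|_{(0,(LR)\iv)}=\lambda_1$, together with $\tilde\varphi|_{[L\iv,\infty)}=(1-\epsilon)r$, $\tilde\phi|_{[L\iv,\infty)}=(LR_2)\iv\delta_2=\delta$, $\tilde\rho|_{[L\iv,\infty)}=\lambda_1$. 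Hence on the overlap $r\in[L\iv,1]$ the two families agree identically, and we may glue them to produce $(\varphi,\phi,\rho)$ on $(0,\infty)$ with the three-zone behaviour asserted, and with $\widehat{\Ric}_{\varphi,\phi,\rho}\geqs 0$ throughout, since nonnegativity is preserved piecewise from the two model lemmas.

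I do not expect a genuine obstacle in the proof of the present lemma: all the real work is contained in the two preceding generalised model lemmas, and what is left is essentially bookkeeping. The only items requiring care are ensuring that the chosen $\delta$ satisfies simultaneously $\delta<\delta_0$ (the admissibility condition of Lemma \ref{lmm2-localmodel1}) and the declared bound $\delta<c(n_i,\epsilon)L\iv$ in the statement; both follow at once from $\delta=\delta_2/(LR_2)$ combined with $\delta_2/R_2<\delta_0$, and that the glued function is smooth, which is automatic because the two constructions agree as smooth functions on the entire overlap interval $[L\iv,1]$ rather than only at a point.
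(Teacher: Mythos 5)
Your proposal is correct and coincides with the paper's intended argument: the paper explicitly states Lemma \ref{lmm2-unit} without proof because it is the verbatim transposition of the proof of Lemma \ref{lmm-unit}, with Lemmas \ref{lmm-localmodel1} and \ref{lmm-localmodel2} replaced by Lemmas \ref{lmm2-localmodel1} and \ref{lmm2-localmodel2}, and your scale-matching choice $\delta=\delta_2/(LR_2)$ with $\lambda:=LR_2\lambda_1$ is exactly the one used there. No further comment is needed.
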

\begin{proposition}[Connecting]\label{prop2-connecting}
	Given $n_i\geqs2$, $\epsilon>0$, $L>1$ and $1\leqs i_0\leqs N-2$. Consider the warped product with functions
    \begin{align*}
        f_{i_0}=\varphi,\quad f_{i}=\phi \text{ for }1\leqs i\leqs N-2 \text{ and }i\neq i_0,\quad f_N=\rho.
    \end{align*}
  Then there exists $k=k(n_i)$, $R=R(n_i,\epsilon,L)>1$ and positive smooth functions $\varphi,\phi,\rho,f_{N-1}$ on $r\in (0,\infty)$ such that
	\begin{equation*}
		\begin{array}{lllll}
		\text{For } r\in(0,(2L^2R^3)\iv): & \varphi=kr, & \phi=kr, & f_{N-1}=kr,&
  \rho=\lambda_1,\\
            \text{For } r\in[(2L^2R^2)\iv,(2LR^2)\iv]:&
		\varphi=\delta_1, & \phi=\delta_1,&f_{N-1}=(1-\epsilon)r, & \rho=\lambda_1,\\
        \text{For } r\in[(2LR)\iv,(LR)\iv]:&
		\varphi=kr, & \phi=kr, &f_{N-1}=kr, & \rho=\lambda_2,\\
         \text{For } r\in[L\iv,1]:&
		\varphi=(1-\epsilon)r, & \phi=\delta_2, &f_{N-1}=\delta_2, & \rho=\lambda_2\\
        \text{For } r\in[R,\infty):&
		\varphi=kr, & \phi=kr,&f_{N-1}=kr, & \rho=\lambda_3,
	\end{array}
	\end{equation*}
	 where $0<\delta_1<c(n_i,\epsilon)(L^2R^2)\iv$, $0<\delta_2<c(n_i,\epsilon)L\iv$ and $\widehat{\Ric}\geqs0$.

\end{proposition}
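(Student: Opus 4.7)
The plan is to imitate the proof of Proposition~\ref{prop-connecting}, but in the $N$-fold setting: apply Lemma~\ref{lmm2-unit} twice with the ``expanding'' factor chosen differently each time, then glue the two blocks along a common cone region. The essential observation, already signaled before the statement, is that $\widehat{\Ric}_{\varphi,\phi,\rho}$ has the same structural form as the triple-warped Ricci tensor of Lemma~\ref{lmm-riccicurvature}, so every curvature computation used in the proof of Lemma~\ref{lmm2-unit} is indifferent to how the collapsed factors are relabelled amongst themselves.

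For the first block I would apply Lemma~\ref{lmm2-unit} after a relabelling in which $f_{N-1}$ plays the role of the expanding $\varphi$, and the collapsed $\phi$ of the lemma is assigned simultaneously to $f_{i_0}$ and to every $f_j$ with $1\le j\le N-2$, $j\ne i_0$. Since such a relabelling amounts only to a permutation of the factors $(M_j,g_j)_{j<N}$, and Lemma~\ref{lmm2-unit} allows the collapsed $\phi$ to represent any collection of factors with $n_j\ge 2$, its conclusion still holds. This produces a block whose middle region satisfies $f_{N-1}=(1-\epsilon)r$, $\varphi=\phi=\delta_1$, $\rho=\lambda_1$, with both ends in the cone form $\varphi=\phi=f_{N-1}=kr$. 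For the second block I would apply Lemma~\ref{lmm2-unit} in its original form, so that $\varphi=f_{i_0}$ is the expanding factor and the collapsed $\phi$ now carries all remaining $f_j$ $(j\ne i_0,N)$, including $f_{N-1}$; the middle region then has $\varphi=(1-\epsilon)r$ and $\phi=f_{N-1}=\delta_2$, matching the fourth row of the proposition.

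To assemble the two blocks, I rescale the first block by the factor $2LR^2$, which places its middle at $[(2L^2R^2)\iv,(2LR^2)\iv]$ and its far cone region on $[(2LR)\iv,\infty)$; combined with the initial cone $(0,(LR)\iv)$ of the unrescaled second block, the two agree in form on the overlap $[(2LR)\iv,(LR)\iv]$, where both have $\varphi=\phi=f_{N-1}=kr$ with $\rho$ constant. A further rescaling of the $\sphere^2$ fiber in one block matches the two $\rho$-values, after which I glue smoothly. The one step that requires genuine attention is the relabelling used in the first block: the proof of Lemma~\ref{lmm2-unit} internally depends on the collapsed total dimension $n=\sum_{i=2}^{N-1}n_i$, and one has to confirm that the curvature bookkeeping still closes when this sum is reorganised so that $n_{i_0}$ lies among the collapsed indices and $n_{N-1}$ is the expanding one. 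This is immediate from the form of $\widehat{\Ric}_{\varphi,\phi,\rho}$, which depends only on the \emph{total} collapsed and expanding dimensions and not on how they are partitioned among individual factors; hence the inequalities proved in Lemma~\ref{lmm2-unit} carry over verbatim, and the gluing produces the desired functions.
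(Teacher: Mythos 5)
Your proposal is correct and follows essentially the same route as the paper, which omits the proof of Proposition~\ref{prop2-connecting} precisely because it is the proof of Proposition~\ref{prop-connecting} transplanted verbatim: apply the block lemma twice with the expanding role given to $f_{N-1}$ and to $f_{i_0}$ respectively, rescale one block by $2LR^2$, match the $\rho$-values by a further scaling (which only increases Ricci), and glue along the common cone region $[(2LR)\iv,(LR)\iv]$. Your observation that the curvature estimates depend only on the total collapsed dimension and the bound $\Ric_{g_i}\geqs(n_i-1)\geqs 1$, so that the relabelling of which factor expands is harmless, is exactly the point the authors make when they note the similarity of $\widehat{\Ric}_{\varphi,\phi,\rho}$ to Lemma~\ref{lmm-riccicurvature}.
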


Now we are ready to prove Theorem \ref{thm-riemcone}.
\begin{pfthm2} 
Let $\tilde{N}=N+2$ and $(M_{\tilde{N}-1},g_{\tilde{N}-1})\simeq(M_{\tilde{N}},g_{\tilde{N}})\simeq(\sphere^2,g_{\sphere^2})$. The argument is almost same as proof of Theorem \ref{thm-weakregular}. We similarly use Proposition \ref{prop2-connecting} repeatedly to construct $\tilde{g}_i$ by induction, with the only difference being that each time we apply Proposition \ref{prop2-connecting}, we select a different $i_0$ in sequence. Since $(M_{\tilde{N}-1},g_{\tilde{N}-1})\simeq(\sphere^2,g_{\sphere^2})$ and $f_{\tilde{N}-1}=(1-\epsilon_i)r$ on a sequence of intervals, we can still use Lemma \ref{lmm-smoothing} to get $\hat{g_i}$ by smoothing $\tilde{g}_i$. By precompactness, there exists a metric space $(X,d)$ such that
\begin{align*}
	(\hat{M},\hat{g}_i,O_i)\stackrel{GH}{\longrightarrow} (X,d ,x ).
	\end{align*}
 For each $i_0=1,\cdots,N$, we can find $A_i\to\infty$, such that the rescaled metrics $(\hat{M},A_i^2\hat{g}_j,O_j)$ for $j\geqs i$ become
 \begin{align*}
     (f_{i_0})_j|_{(L_i^{-1/2},\frac12L_i^{1/2})}=(1-\epsilon_i)r,\quad (f_k)_j|_{(L_i^{-1/2},\frac12L_i^{1/2})}\leqs \Psi(i\iv) \text{ for }k\neq i_0.
 \end{align*}
 Let $j\to\infty$, we have
\begin{align*}	d_{GH}\left(A_{L_i^{-1/2},\frac12L_i^{1/2}}(X,A_id,x),A_{L_i^{-1/2},\frac12L_i^{1/2}}(C(M_{i_0},g_{i_0}),v_C)\right)\leqs \Psi(i\iv).
	\end{align*}
	Let $i\to\infty$, we have
	\begin{align*}
		(X,A_id,x)\stackrel{GH}{\longrightarrow} (C(M_{i_0},g_{i_0}),v_C).
	\end{align*}
\end{pfthm2}

\bibliographystyle{ytamsalpha}
\bibliography{ref.bib}	

\end{document}